\algnewcommand\algorithmicinput{\textbf{Input:}}
\algnewcommand\INPUT{\item[\algorithmicinput]}
\algnewcommand\algorithmicoutput{\textbf{Output:}}
\algnewcommand\OUTPUT{\item[\algorithmicoutput]}
\newcommand{\eps}{{\varepsilon}}
\newcommand{\RR}{{\mathbb{R}}}
\newcommand{\NN}{{\mathbb{N}}}
\newcommand{\ZZ}{{\mathbb{Z}}}
\newcommand{\CC}{{\mathbb{C}}}
\newcommand{\dd}{{\,\mathrm{d}}}
\newcommand{\Fref}[1] {Figure~\ref{#1}}
\newcommand{\Tref}[1]{Table~\ref{#1}}
\def\mydefcal#1{\expandafter\def\csname cal#1\endcsname{\mathcal{#1}}}
\def\mydefallcal#1{\ifx#1\mydefallcal\else\mydefcal#1\expandafter\mydefallcal\fi}
\def\mydefb#1{\expandafter\def\csname bm#1\endcsname{\bm{#1}}}
\def\mydefallb#1{\ifx#1\mydefallb\else\mydefb{#1}\expandafter\mydefallb\fi}
\def\mydeffrak#1{\expandafter\def\csname frk#1\endcsname{\mathfrak{#1}}}
\def\mydefallfrak#1{\ifx#1\mydefallfrak\else\mydeffrak{#1}\expandafter\mydefallfrak\fi}
\renewcommand{\div}{{\operatorname{div}}}
\newcommand{\norm}[2]{\left\| #1 \right\|_{#2}}
\newcommand{\abs}[1]{{\left| #1 \right|}}
\newcommand{\set}[1]{{\left\{ #1  \right\}}}
\newcommand{\oO}[1]{\mathcal{O}\!\left( #1\right )}
\newcommand{\bracket}[1]{\left\langle #1 \right\rangle}
\newcommand{\closure}[1]{{\mkern 1.5mu\overline{\mkern-1.5mu#1\mkern-1.5mu}\mkern 1.5mu}}
\newtheorem{thm}{Theorem}[section]		
\newtheorem{proposition}[thm]{Proposition}
\newtheorem{lemma}[thm]{Lemma}
\newtheorem{definition}[thm]{Definition}
\newtheorem{example}[thm]{Example}
\newtheorem{remark}[thm]{Remark}
\newcommand{\bmgamma}{\bm{\gamma}}
\newcommand{\bmrho}{\bm{\rho}}
\newcommand{\bmbeta}{\bm{\beta}}
\newcommand{\train}{\EuScript{S}}
\newcommand{\reg}{\EuScript{R}}
\newcommand{\frdelta}{{\mathfrak \delta}}
\newcommand*\samethanks[1][\value{footnote}]{\footnotemark[#1]}
\title{
       Higher-order Quasi-Monte Carlo Training\\
       of Deep Neural Networks}
\author{M. Longo\thanks{Seminar for Applied Mathematics (SAM), D-MATH \newline
  ETH Z\"urich, R\"amistrasse 101, 
  Z\"urich-8092, Switzerland} \and S. Mishra\samethanks \and T. K. Rusch\samethanks \and Ch. Schwab\samethanks}
\begin{document}
\maketitle
\begin{abstract}
We present a novel algorithmic approach and 
an error analysis leveraging Quasi-Monte Carlo points
for training deep neural network (DNN) surrogates of 
Data-to-Observable (DtO) maps in engineering design.

Our analysis reveals higher-order consistent, 
\emph{deterministic} choices of training points
in the input data space for deep and shallow 
Neural Networks with holomorphic activation
functions such as $\tanh$.

These novel training points are proved to 
facilitate higher-order decay (in terms of the number of training samples) of the underlying generalization error, 
with consistency error bounds that 
are free from the curse of dimensionality 
in the input data space, provided that 
DNN weights in hidden layers satisfy certain summability 
conditions.

We present numerical experiments for DtO maps from elliptic
and parabolic PDEs with uncertain inputs  that confirm the theoretical analysis.
\end{abstract}
\section{Introduction}
\label{sec:Intro}
Many computational problems with PDEs require the 
evaluation of \emph{data-to-observables maps} (DtOs for short)
(functionals, quantities of interest) of the generic form,
\begin{equation}
\label{eq:obs}
\mbox{Given Data}\; y\in Y \mbox{compute observable}~g(y).
\end{equation}
Here, 
the 
observable 
$g: Y \subset {\mathbb R}^d \to {\mathbb R}^{N_{\text{obs}}}$,
a is a function of some prescribed regularity, 
that depends on the solution of an underlying operator equation
subject to input data $y\in Y$.
Such observables arise for instance in 
uncertainty quantification (UQ) of PDEs, 
where $Y$ denotes a data space.
We focus on $Y$ as a bounded subset of euclidean space 
parametrizing input data for the PDE model. 
Other examples include optimal control and design for PDEs, with $Y$
being the control (or design) space. 
Note that this very generic definition of observables in \eqref{eq:obs} 
also includes the solution field by letting $Y$ be the space (space-time) domain. 

Computing \emph{observables} of the form \eqref{eq:obs} requires one 
to numerically approximate PDEs and possibly, 
use quadratures to approximate integrals. 
Given that currently available PDE solvers such as finite element or finite volume methods 
can be computationally expensive, solving \emph{many query} problems such as UQ, 
inverse problems and optimal control (design) for very-high 
dimensional parameter space $Y$ in \eqref{eq:obs}, 
that require a large number of calls to the underlying PDE solver, 
can be prohibitively expensive. 

\emph{Surrogate models} \cite{SUR} provide a possible pathway for 
reducing the computational cost of such \emph{many query} problems for PDEs. 
Surrogates such as reduced order models \cite{AQAMFNRB} and Gaussian process regression \cite{GPRbook}, 
build a \emph{surrogate approximation}
$\hat{g}: Y \mapsto {\mathbb R}^{N_{\text{obs}}}$ such that $\hat{g} \approx g$ in a suitable sense. 
As long as the surrogate is sufficiently accurate and the cost of 
evaluating the surrogate is 
significantly less than the cost of numerically evaluating 
the underlying DtO map $g$ \eqref{eq:obs} with the same level of fidelity,
one can expect the surrogate model to be more computationally efficient than 
so-called ``high-fidelity'' PDE solvers. These deliver, for example
by discretization of the PDE with discretization parameter $\frdelta\in (0,1]$,
a one-parameter family of approximate forward maps $\{g_\frdelta\}_{0<\frdelta\leq 1}$,
which is assumed to be consistent with $g$ in the sense that 
\begin{equation}\label{eq:gdeltcnsist}
\lim_{\frdelta\downarrow 0} g_\frdelta = g \quad\mbox{uniformly with respect to} \;\; Y\;.
\end{equation}
Here, the discretization parameter $\frdelta$ could, e.g., be a stepsize $\Delta t>0$,
a FE/FD meshwidth $h>0$ or the reciprocal of a spectral order.

Building surrogates $\hat{g}$ to $g$ with certified fidelity uniform with respect to the
set of input data $Y$ can be challenging. 
Accordingly, during the past decade computational science and engineering 
has witnessed the arrival of mathematical and computational frameworks 
aiming at generating such surrogates computationally, and to quantify
the corresponding emulation error mathematically. 
These frameworks go by the name of 
``Reduced Basis (RB) methods'' or ``Model Order Reduction (MOR) techniques''. 
They aim at \emph{computational determination of low-dimensional subspaces} $X_N$
of the vector space $X$ containing the response of the PDE model of interest.
We refer to the surveys \cite{AQAMFNRB,HestRozStamRB} and the references there
for details and theory. 
In MOR and RB, the phase of building $\hat{g}$ is usually referred to as 
``offline phase'' and is a) usually quite costly and b) 
is based on executing certain greedy searches on 
numerical approximations $g_\frdelta$ of the PDE of interest.

Deep neural networks (DNNs) (e.g. \cite{DLbook}; specifically, here the term 
``DNN'' will denote a so-called feed-forward NN)
are concatenated, repeated compositions 
of affine maps and scalar non-linear activation functions.
In recent years, DNNs emerged as another 
powerful tool in computational science with well-documented success in a variety of
tasks such as image classification, text and speech recognition, 
robotics and protein folding \cite{DLnat}. 
Their mathematical structure allows the interpretation of 
MOR and RB as particular instances 
(see, e.g., \cite{RegDedAQ_MORODEDNN} for a 
development of this point of view in the context of parametric dynamical systems).
Given their \emph{universality}, i.e., the
ability to approximate (``\emph{express}'' in the terminology of the deep learning community) 
large classes of functions 
(e.g. \cite{PinkusActa} and the references there),
and their high approximation rates on regular maps 
(e.g. \cite{BGKP2019,OSZ19_2771,OPS19_2738} and the references there),
DNNs are increasingly being used in various contexts 
for the numerical approximation of  DtO maps for PDEs \cite{KAR1,E1,E2}.

In particular, recent papers such as \cite{LMR1,LMM1,MR1,LMRP1} 
have proposed using DNNs for building surrogates for \emph{observables} of PDEs 
and applying these surrogates to accelerate UQ for PDEs \cite{LMR1,LMM1} 
and PDE constrained optimization \cite{LMRP1}. 
These articles use DNNs within the paradigm of \emph{supervised learning} i.e. select a \emph{training set} 
$\train \subset Y$ and use a (stochastic) gradient descent algorithm 
to find tuning parameters (weights and biases) 
that provide the smallest mismatch between the underlying DtO map $g$ 
and the resulting neural network on this training set. 

It is standard in machine learning \cite{MLbook}
to choose independent and identically distributed 
random points in $Y$ to constitute the training set. 
However, 
as pointed out in \cite{LMR1,MR1} and references therein, 
the so-called \emph{generalization gap}, 
i.e. the difference between the \emph{generalization error} or \emph{population risk} 
(see \eqref{eq:gerr} for a precise definition) and 
the computable \emph{training error} or \emph{empirical risk} for \emph{trained} DNNs 
scales, at best, as $1/\sqrt{N}$ (in the root mean square sense), 
with $N = \#(\train)$ being the number of training points (samples).
We refer again to \cite{MLbook} and references therein 
for sharper estimates on the generalization gap. 

Given this slow decay of generalization error in terms of the number $N$
of i.i.d random training samples,
one possibly needs a large number of samples to achieve 
a desired level of DNN fidelity.
In emulation of DtOs from PDEs,
training data is generated by $N$-fold calling the underlying PDE solver 
(with discretization error $|g - g_\frdelta|$ well below the DNN target fidelity).

Thus,
a potentially prohibitively large number $N$ 
of calls to the `high-fidelity' forward PDE solver 
could preclude efficient training and surrogate modeling, 
see, e.g. \cite{LMR1} and references therein. 
In this reasoning, 
we must distinguish between the {\bf exact DtO map} $g$ 
(which is, generally, not numerically accessible) 
and its 
{\bf numerical approximations} $\{ g_\frdelta : 0 < \frdelta \leq 1 \}$.
For our results to hold for the exact forward map $g$, 
we require the discretization error $\delta$ in the numerical approximation $g_\delta$ 
of the DtO map $g$ to be uniformly smaller than the target DNN emulation fidelity 
$\hat{\eps} > 0$ of the DNN $\hat{g}$ approximating $g$.
I.e., we require 
\begin{equation}\label{eq:hateps}
\sup_{y\in Y} | g(y) - g_\delta(y)| \leq \delta \stackrel{!}{\leq} \hat{\eps}: = \sup_{y\in Y}| g(y) - \hat{g}(y)| \;.
\end{equation}
In order to alleviate prohibitive DNN training cost, 
one could consider more sophisticated training designs $\train$.
The authors of \cite{LMR1,MR1} propose using \emph{low-discrepancy sequences}, 
such as Sobol' and Halton sequences used 
in \emph{quasi-Monte Carlo} (QMC) quadrature algorithms \cite{CAF1}, as training points. 
In \cite{MR1}, the authors prove that as long as the underlying map \eqref{eq:obs} is of bounded Hardy-Krause variation, 
one can prove that the generalization gap for supervised DNN with QMC training points, 
decays (upto a logarithmic correction) as $N^{-1}$ i.e. linearly in the number of training samples. 
Furthermore, 
these training points lead to deterministic bounds on the generalization gap 
that are inherently more robust and easier to verify than 
probabilistic root mean square bounds with random training points.  
Numerical examples, presented in \cite{LMR1} and \cite{MR1} 
demonstrate the increased efficiency of using QMC points for training, 
when compared to random points.

However, as is well known, 
the logarithmic correction stemming from the Koksma-Hlawka inequality for QMC, 
\emph{depends exponentially} 
on the underlying dimension $d$ of the parameter space $Y$. 
Consequently, 
the proposed deep learning algorithm with (for example) sets $\train$ 
chosen as Sobol' training points, 
suffers from the \emph{curse of dimensionality}. 
This is also demonstrated in numerical experiments in \cite{MR1} 
where the deep learning algorithm based on QMC training points, 
outperforms the one with random training points, only for problems in moderately high dimensions. 
It is natural to ask if one can find training sets $\train$ for DNNs 
which overcome this curse of dimensionality, 
while still possessing a faster rate of decay than the use of i.i.d random training points. 
In particular, if one can find training  point designs $\train$
which ensure higher than linear rate of decay of the generalization gap, 
independent of high parameter space dimension.

It turns out that recent developments in \emph{Quasi-Monte Carlo integration} 
algorithms, namely the design of \emph{higher order QMC} (HoQMC) rules 
\cite{DLS16,DGLS19,GS16} can provide positive answers to the above questions. 
In particular, in the context of numerical integration, 
these rules lead to a dimension-independent, superlinear decay of the quadrature error 
as long as the integrand appearing in the loss function 
is \emph{holomorphic} with holomorphy domains quantified in terms 
of the coordinate and the integrand dimension; 
see, e.g., \cite{CCS15,DLS16}. 

The loss function being (an integral of) a difference between
the map $y\mapsto g(y)$ and a DNN surrogate $y\mapsto\hat{g}(y)$,
this entails holomorphy requirements of both, the DtO map, as well
as of the DNN surrogates. We
point out that a large number of PDEs, 
particularly of the elliptic and parabolic type possess solutions 
(and observables)  whose DtO maps are holomorphic in the parameter space. 
Moreover, in \cite{CCS15} the authors introduced two sufficient criteria 
that ensure holomorphy in a variety of cases, 
including UQ for non-linear PDEs and shape holomorphy 
(e.g. \cite{CCS15,CSZ18_2319,HS19_847,AJSZ20_2734} and the references here).

Motivated by the higher-order QMC rules in \cite{DLS16,DGLS19,GS16}, 
in this article we make the following \emph{contributions}:
\begin{itemize}
\item 
We develop several \emph{novel DNN training strategies}, 
based on the use of \emph{deterministic, higher order Quasi-Monte Carlo (QMC) point designs}
as DNN training points, 
in order to emulate Data-to-Observables maps 
for systems governed by parametric PDEs.
\item 
We prove, 
under \emph{quantified holomorphy hypotheses 
on the DtO map to be emulated by the DNN
and 
on the scalar activation function of the DNN}, 
that \emph{for any input dimension} $d$, 
the generalization gap of the resulting trained DNN decays 
superlinearly with respect to the number $N$ of training points, 
and independent of the dimension of data space.
I.e., we prove a bound 
$O(N^{-\alpha})$ with $\alpha \geq 2$ and $N$ being the number of training points, 
with the constant implied in $O( )$ being independent of the dimension $d$ of
the input parameter domain. 
Thus, 
the proposed deep learning algorithm can achieve significantly lower errors
than the one based on random i.i.d training points, 
while still being free of the curse of dimensionality.
\item 
We present a suite of numerical experiments for data-to-observables which arise from parametric
PDEs with uncertain input data to illustrate the theory.
We also show numerical experiments which strongly indicate that several hypotheses on holomorphy and
sparsity in our results appear to be necessary, while others seem to be 
artifacts of our proofs based on complex-variable techniques.
\end{itemize} 
The rest of the paper is organized as follows. 
The deep learning algorithm is presented in Section \ref{sec:2} and is analyzed in Section \ref{sec:3}. 
Several illustrative
numerical experiments are presented in Section \ref{sec:4} and the 
contributions of the current article and possible extensions are discussed in Section \ref{sec:5}. 
\section{ Deep Learning on higher-order Quasi-Monte Carlo training points}
\label{sec:2}
In the present section we briefly recapitulate elements from Quasi-Monte Carlo integration,
as they pertain to the proposed higher-order lattice integration schemes which we subsequently
use for defining the DNN loss function.
In Section \ref{sec:DefDNN} we define the architectures of DNNs that we consider.
Section \ref{sec:lf} then introduces the computable loss functions which we use in DNN training.
and outline our proposed 
``Deep learning with Higher-order Quasi-Monte Carlo points (DL-HoQMC)'' algorithm.
\subsection{Higher-order Quasi-Monte Carlo rules}
\label{sec:HoQMC}
We consider two classes of higher order QMC quadrature rules in this article.
Either class of rules is derived from first order digital nets construction of Polynomial lattices, 
as originally introduced by Niederreiter in \cite{N92}. 
We briefly recapitulate the essentials.
In the following, let $b \ge 2$ be a prime number, 
$\mathbb{F}_b$ be the finite field with $b$ elements, 
$\mathbb{F}_b[x]$ be the set of all polynomials over $\mathbb{F}_b$ 
and 
$\mathbb{F}_b((x^{-1}))$ be the set of all formal Laurent series of the form
$\sum_{i=t}^\infty a_i x^{-i}$, $t \in \mathbb{Z}$, $a_i$ in $\mathbb{F}_b$.  

We can identify an integer $0 \le n < b^m$ given by the $b$-adic expansion $n = n_0 + n_1 b + \cdots + n_{m-1} b^{m-1}$ and $n_0, \ldots, n_{m-1} \in \set{0, 1, \ldots, b-1}$, with its corresponding polynomial $n(x) \in \mathbb{F}_b[x]$ given by $n(x) = n_0 + n_1 x + \cdots + n_{m-1} x^{m-1}$, where we now view $n_0, \ldots, n_{m-1} $ as elements of $ \mathbb{F}_b$.

\begin{definition} [Polynomial lattice rule]
Let $m \ge 2$ be an integer and $p \in \mathbb{F}_b[x]$ be a polynomial with $\deg(p) = m$. Let $\bm{q} = (q_1, \ldots, q_d)$ be a vector of polynomials over $\mathbb{F}_b$ with degree $\deg{q_j} < m$. We define the map $v_m: \mathbb{F}_b((x^{-1})) \to [0,1)$ by
\begin{equation*}
v_m \left( \sum_{i=t}^\infty a_i x^{-i} \right) = \sum_{i = \max(1, t)}^m a_i b^{-i},
\end{equation*}
for $ t \in \ZZ $.
For $0 \le n < b^m$, we put
\begin{equation*}
y_n = \left(v_m\left(\frac{n(x) q_1(x)}{p(x)} \right), \ldots, v_m\left(\frac{n(x) q_d(x)}{p(x)} \right) \right) \in [0,1)^d.
\end{equation*}
Then the set $ P_m(\bmq,p) = \set{y_0, y_1, \ldots, y_{b^m-1}}$ is called a polynomial lattice point set and a quadrature rule $ Q_{b^m,d} $,  using this point set is called a polynomial lattice rule.
\end{definition}

The following class of higher order lattice rule was proposed in \cite{DGY19} and developed in \cite{DLS20_902}. It mainly relies on an asymptotic expansion of the quadrature error that allows to apply Richardson extrapolation to the sequence of quadrature rules $ (Q_{b^m,d})_{m \in \NN} $, when applied to integrands of sufficient regularity. 

\begin{definition}[Extrapolated Polynomial lattice] \label{def:EPL}
Let 
$ \alpha \ge 2 $ be a natural number 
and 
$ Q_{b^{m - \tau + 1},d}, \tau \in\set{ 1,\dots, \alpha} $ 
be a set of Polynomial lattice rules with associated lattice point sets 
$ P_{m - \tau + 1}(\bmq_{m - \tau + 1},p_{m - \tau + 1}) $ respectively.

We define, for suitable coefficients $ a_{\tau}^{(\alpha)} $  defined in \cite[Lemma 2.9]{DGY19}, 
the quadrature rule
\begin{equation*}
Q_{b^m,d}^{(\alpha)} = \sum_{\tau= 1}^{\alpha} a_{\tau}^{(\alpha)} Q_{b^{m - \tau + 1},d}
\end{equation*}
and we call it \emph{Extrapolated Polynomial Lattice} (EPL) rule of order 
$ \alpha $ and lattice cardinality $ N = b^{m -\alpha + 1}+ \ldots + b^m $.
\end{definition}
An alternative definition of higher order QMC rules is based on so-called 
\emph{digit interlacing polynomial lattices}, 
as explained in the following definition, \cite{DKLNS14,DGLS19,DLS16,GS16}.
\begin{definition}[Interlaced Polynomial lattice]
Let $ \alpha \ge 2 $ be a natural number and let
$ \calD_{\alpha}\colon [0,1)^{d\alpha} \to [0,1)^{d}$ 
be defined by 
\begin{equation*}
\calD_{\alpha}(x_{1},\ldots,x_{d\alpha}) 
= 
(\calD_{\alpha}(x_{1},\ldots,x_{\alpha}),\ldots,\calD_{\alpha}(x_{(d-1)\alpha + 1},\ldots,x_{d\alpha})),
\end{equation*} 
and satisfying for any 
$x = \left (\sum_{i=1}^{\infty}x_{1,i}b^{-i},\ldots,\sum_{i=1}^{\infty}x_{\alpha,i}b^{-i}\right )\in [0,1)^{\alpha}$, 
with $ x_{\tau,i}\in \mathbb{F}_b , \ \forall \tau=1,\ldots,\alpha, \forall i \in \NN$ 
and such that $ (x_{\tau,i})_{i>i_0} $ is not constant equal to $  b-1 $ after any index $ i_0 \in\NN $, 
\begin{equation*}
    \calD_{\alpha}(x) = \sum_{i=1}^{\infty} \sum_{\tau=1}^{\alpha} x_{\tau,i} b^{-(\alpha (i-1) + \tau)} \in [0,1).
\end{equation*} 
Then, a quadrature rule using $ \calD_{\alpha}(P_m((q_1, \ldots, q_{\alpha d}),p)) $ as point set is called \emph{Interlaced Polynomial Lattice} (IPL) rule of order $ \alpha $ and cardinality $ N = b^m $.
\end{definition}
In the next sections we will always work with the natural choice of basis $ b=2 $ 
so that digit operations become bit operations in the numerical computations.
Conversely to classical QMC point sets as Sobol' and Halton sequences, 
EPL and IPL are known to achieve dimension-independent error bounds (e.g. \cite{DKLNS14,DLS16,DGLS19,DGY19}). 
The main reason for such improvement is that their construction can be done in a problem-dependent manner,  
exploiting the varying importance of the individual components in the vector $ y = (y_1,\ldots, y_d) \in Y $. 
For this purpose we define a set of positive \emph{weights} 
$ \bmgamma := (\gamma_{\frku})_{\frku \subseteq \set{1,\ldots,d}} $, 
that quantify the relative importance of the variables in the set $ \frku \subseteq \set{1,\ldots,d} $.
In our discussion we will need QMC weights 
in SPOD\footnote{SPOD: ``Smoothness-driven, Product and Order Dependent'', see \cite{DKLNS14}.}
form,
\begin{equation}
\gamma_{\frku} := \sum_{\bm{\nu} \in \set{1:\alpha}^{\abs{\frku}}} \abs{\bm{\nu}}! \prod_{j \in \frku} \left ( 2^{\delta(\nu_j, \alpha )}  \beta_{j}^{\nu_j}\right ), \qquad (\beta_{j})_j \in \ell^{1}(\NN)
\end{equation}
that allow for a wider class of applications compared to product weights $ \gamma_{\frku}:=\prod_{j \in \frku} \beta_j, \ \frku \subseteq \set{1,\ldots,d} $. The weights play a key role in the \emph{Component-By-Component} (CBC) construction of a generating vector $ \bmq $ of a polynomial lattice. The details of the CBC construction and their fast version using FFT can be found in \cite{DKLNS14} for IPL rules and in \cite{DLS20_902} for EPL rules.

The CBC construction for EPL rules proves to be slightly cheaper in terms of operations: $ \oO{(\alpha + d)N \log N + \alpha^2 d^2 N} $ for EPL rules versus $ \oO{\alpha d N \log N + \alpha^2 d^2 N} $ for IPL, both requiring $ \oO{\alpha d N} $ memory. On the other hand, IPL rules seem to give slightly better outcomes as shown in \cite{DGY19}, leading to an overall comparable performance. One advantage of EPL over IPL, is that they allow for computable, asympotically exact, a-posteriori error estimates \cite{DLS20_902}. 

We will consider either class of point sets as the training sets of our deep learning algorithm that we describe below. 

\subsection{Deep Neural networks}
\label{sec:DefDNN}
We consider the following form of deep neural networks (DNNs) in this paper. Let $\sigma\colon \RR^k \to \RR^k $ 
be a nonlinear activation function for $ k \in \NN $, $ L \in \NN $
and a collection of weights 
$ W^{(\ell)} \in \RR^{d_{\ell}\times d_{\ell-1}} $, 
and biases
$ b^{(\ell)} \in \RR^{d_{\ell}} $ for $ \ell = 1, \ldots,L $. We shall refer to the integer $L \geq 1$ as \emph{depth of the DNN} and 
we denote the layer widths tuple $ {\mathfrak d} := \{ d_0,d_1,...,d_L \} \in \NN^{L+1} $ as \emph{the architecture of the DNN};
that is, $ d_{0} = d \in \NN $ is the input dimension and 
$ d_{L} = N_{\text{obs}}$ denotes the output dimension in the definition of the underlying observable \eqref{eq:obs}.
We collect the set of parameters of the DNN in 
\begin{equation}\label{eq:DNNpara}
\Theta := \set{(W^{(\ell)},b^{(\ell)}) \in \RR^{d_{\ell}\times d_{\ell-1}} \times \RR^{d_{\ell}} \colon \ell = 1,\ldots, L }.
\end{equation}
Notice that $\Theta$ depends on the sequence ${\mathfrak d}$ 
which we do not indicate notationally. 
Also,
we do not impose any further conditions (such as sparsity, clipping or quantization)
on the weight matrices $W^{(\ell)}$ or on the bias vectors $b^{(\ell)}$ in \eqref{eq:DNNpara}.

For any $ (W,b) \in \RR^{d\times d'} \times \RR^{d} $,  $ d,d' \in \NN $,
let  $ f_{W,b} \colon \RR^{d'} \to \RR^{d} $ denote nonlinear map which is defined by
\begin{equation}\label{def:fWb}
f_{W,b}(y) := \sigma(Wy+ b)\;.
\end{equation}
For $\theta\in \Theta$,
define the Neural network map 
$\phi_{\theta}^L: \RR^{d} \to \RR^{d_{L}}$ 
as
\begin{equation} \label{def:dnn}
\phi_{\theta}^{L}(y):= W^{(L)}(f_{W^{(L - 1)},b^{(L - 1)}} \circ \cdots \circ f_{W^{(1)},b^{(1)}}(y)) + b^{(L)}
\;.
\end{equation}
We observe that the form \eqref{def:dnn} of the neural network corresponds to that
of a fully-connected feed-forward multi-layer perceptron \cite{DLbook}. 
This form is very general and other specific types of
neural networks, such as convolutional neural networks (CNNs), or sparsely connected DNNs,
can be realized by imposing constraints on the structure of the weight matrices in $\Theta$ which are used
in \eqref{def:dnn}. 
\subsection{Loss functions}
\label{sec:lf}
As we adopt the paradigm of supervised learning \cite{DLbook}, 
DNNs of the form \eqref{def:dnn} will be \emph{trained} to determine parameters $\theta\in \Theta$
in concrete applications.
I.e., a parameter vector $\theta$ has to be found numerically 
such that the mismatch between the
ground truth (underlying map $g$ \eqref{eq:obs}) and the DNN is 
minimized over the \emph{training set}. 
To this end, we define suitable \emph{loss functions to quantify the mismatch} 
between the map $g$ and its DNN surrogate.

In this context, 
we define for any
suitable polynomial lattice point (or IPL) set $P_m(\bmq, p)$ of cardinality $ 2^m$
\begin{equation}
\label{eq:plf}
\tilde{\calE}_{T,m}(\theta):= 
\left (\frac{1}{2^m} \sum_{y \in P_m(\bmq,p)} \abs{g(y) - \phi_{\theta}^{L}(y)}^{2} \right )^{1/2} , \quad \theta \in \Theta.
\end{equation}

Next, we differentiate between two cases. 
First, for IPL training points, 
we consider the following partial loss function, 
\begin{equation}
\label{eq:ipllf}
J(\theta) := \left(\tilde{\calE}_{T,m}(\theta)\right)^2, \quad \theta \in \Theta.
\end{equation}

Second, for the EPL training points, 
we need to work with suitable extrapolation of the partial loss functions
\begin{equation} \label{eq:epllfalpha}
\calE_{T}^{(\alpha)}(\theta) 
:= 
\left | \sum_{\tau= 1}^{\alpha} a_{\tau}^{(\alpha)} \left (\tilde{\calE}_{T,m - \tau + 1}(\theta)\right )^2 \right |^{1/2},
\theta \in \Theta,
\end{equation}
with the coefficients $ a_{\tau}^{(\alpha)} $ as in Definition \ref{def:EPL}. 
For notational simplicity, 
we confine ourselves to $\alpha = 2$, that reads $ a_{1}^{(2)} = 2, a_{2}^{(2)} = -1 $ so that
\begin{equation}
\label{eq:epllf1}
\calE_{T}^{(2)}(\theta) =  \abs{2\tilde{\calE}_{T,m}^{2}(\theta) - \tilde{\calE}_{T,m-1}^{2}(\theta)}^{1/2}.
\end{equation}

We hasten to add, however, that all results generalize verbatim to higher digit interlacing order $\alpha > 2$, 
resp. to higher extrapolation orders.
While the use of IPLs naturally results in positive coefficients in the loss function,
EPLs, being obtained by Richardson type extrapolation formulas, 
and thus involve alternating signs of coefficients.
In numerical DNN training,
solving the optimization problem with alternating sign linear combinations
can be computationally delicate. 
Hence, 
we define the loss function for the EPL training points 
by the following upper bound on \eqref{eq:epllf1},
\begin{equation}
\label{eq:epllf}
J(\theta) =  2 \tilde{\calE}_{T,m}^2(\theta) + \tilde{\calE}_{T,m-1}^2(\theta).
\end{equation}
Note that the loss function \eqref{eq:epllf} 
can be readily generalized for any $\alpha > 2$, 
replacing the coefficients $ a_{\tau}^{(\alpha)}$ 
in \eqref{eq:epllfalpha} by their absolute values.

The goal of the training process in supervised learning is to find the parameter vector $\theta$, 
for which the loss functions \eqref{eq:ipllf} or \eqref{eq:epllf} are minimized. 
It is common in machine learning \cite{DLbook} to regularize the minimization problem for the loss function, 
i.e. we seek to find
\begin{equation}
\label{eq:lf}
\theta^{\ast} = {\rm arg}\min\limits_{\theta \in \Theta} \left(J(\theta) + \lambda \reg(\theta) \right).
\end{equation}  
Here, $J$ is defined by either \eqref{eq:ipllf} (for IPL training points) or \eqref{eq:epllf} (for EPL training points)  and $\reg$ is a \emph{regularization} (penalization) term. A popular choice is to set  $\reg(\theta) = \|\theta_W\|^q_q$ , with $\theta_W$ denoting the concatenated vector of all weights in \eqref{def:dnn} and either $q=1$ (to induce sparsity) or $q=2$. The parameter $0 \leq \lambda \ll 1$ balances the regularization term with the actual loss $J$. 

The above minimization problem amounts to finding a minimum of a possibly non-convex function over a very high-dimensional parameter space. We follow standard practice in machine learning by either (approximately) solving \eqref{eq:lf} with a full-batch gradient descent algorithm or variants of mini-batch stochastic gradient descent (SGD) algorithms such as ADAM \cite{adam}. 

For notational simplicity, we denote the (approximate, local) minimum weight vector in \eqref{eq:lf} as $\theta^{\ast}$ and the underlying deep neural network $\phi_{\theta^{\ast}}^L$ will be our neural network surrogate for the underlying map $g$. The proposed algorithm for computing this neural network is summarized below. \\\\
{\bf Deep learning with Higher-order Quasi-Monte Carlo points (DL-HoQMC)} 
\begin{itemize}
\item [{\bf Inputs}:] 
Underlying map $g$ \eqref{eq:obs}, 
higher-order QMC training points such as EPL or IPL points, 
hyperparameters and architecture of neural network \eqref{def:dnn} with depth $L$
\item [{\bf Goal}:] 
Find neural network $\phi_{\theta^{\ast}}^L$ for approximating the underlying map $g$. 
\item [{\bf Step $1$}:] 
Choose the training set $\train$ either as IPL or EPL QMC points. 
Evaluate $g(y)$ for all $y \in \train$ by a suitable numerical method. 
\item [{\bf Step $2$}:] 
For an initial value of the weight vector $\overline{\theta}$, 
evaluate the neural network $\phi_{\overline{\theta}}^L$ \eqref{def:dnn}, 
the loss function \eqref{eq:lf} and its gradients to initialize the
(stochastic) gradient descent algorithm.
\item [{\bf Step $3$}:] 
Run a stochastic gradient descent algorithm till an approximate local minimum $\theta^{\ast}$ of \eqref{eq:lf} is reached. 
The map $\phi^L_{\theta^{\ast}}$ 
is the desired neural network approximating the map $g$.
\end{itemize}
\section{Analysis of the DL-HoQMC algorithm}
\label{sec:3}
The objective of our analysis of the DL-HoQMC algorithm would be to estimate the so-called \emph{generalization error} 
of this algorithm which is defined as $\calE_{G} = \calE_{G}(\theta^{\ast})$, with 
\begin{equation}
\label{eq:gerr}
\calE_{G}(\theta) = \left (\int_{Y} \abs{g(y) - \phi_{\theta}^{L}(y)}^{2} dy \right )^{1/2}, \quad \theta \in \Theta.
\end{equation}
Throughout the rest of this work we adhere to the convention that the input data is appropriately scaled to the box $ Y:=[0,1]^d $ of Lebesgue measure $ \abs{Y}=1 $. Hence, the expressions \eqref{eq:plf}, \eqref{eq:epllfalpha} are QMC quadrature approximations of $\calE_{G}(\theta)$.

As is customary in machine learning \cite{MLbook,AR1}, we will estimate the generalization error in terms of \emph{computable} training errors such as \eqref{eq:plf} for the IPL training points and \eqref{eq:epllfalpha} for the EPL training points. The key is to realize that the training errors \eqref{eq:plf} and \eqref{eq:epllfalpha} are the QMC quadratures for the integral in \eqref{eq:gerr} defining generalization error. 
Thus, 
higher-order QMC approximation results  (e.g. \cite{DKSActa} and the references there and, for the 
presently proposed QMC integrations, \cite{DLS16,DKLNS14})
can be brought into play to estimate the so-called \emph{generalization gap} 
i.e. difference between quadrature error and computable training errors.

Although the DL-HoQMC algorithm can be applied for approximating any underlying map $g$, 
it is clear from the higher-order QMC theory 
that \emph{dimension independent higher-order approximation} results can only be
obtained for integrands in loss functions 
that exhibit sufficient regularity  with explicit, quantified dependence on the 
coordinate dimension.
Our starting point in determining the appropriate function class for the underlying map as well as 
the approximating neural network is the \emph{weighted unanchored Sobolev space} 
$\calW_{d,\alpha,\bmgamma, 1,\infty}$, 
which is defined by the set of integrand functions 
$ F \in C^{\infty}([0,1]^d)$ 
equipped with the norm
\begin{equation}\label{def:wNorm}
\norm{F}{d,\alpha,\bmgamma, 1,\infty} 
:= 
\sup_{\frku \in \set{1:d}} \frac{1}{\gamma_{\frku}} 
\sum_{\frkv \subseteq \frku} \sum_{\bm{\nu}_{\frku \setminus \frkv} \in \set{1:\alpha}^{\abs{\frku \setminus \frkv}}}
\int_{[0,1]^{\abs{\frkv}}} \abs{\int_{[0,1]^{d - \abs{\frkv}}} \partial_{y}^{(\bm{\nu}_{\frku \setminus \frkv}, \alpha_{\frkv})} F(y) \dd y_{\frkv^c}  } \dd y_{\frkv},
\end{equation}
for a set of weights $ \bmgamma $ to be determined.
 Inspecting \eqref{def:wNorm}, it transpires that an error analysis will require
estimates of higher order derivatives of input-output maps of DNNs in terms of 
(bounds on) NN weights, biases and activations. 
With the focus on deep NNs, we found the use of the multivariate chain rule 
in bounding $\norm{\hat{g}}{d,\alpha,\bmgamma, 1,\infty}$ prohibitive
due to the compositional structure of DNNs.
Instead, we opt on using complex 
variable techniques based on quantified holomorphy to that end. 
Being derivative-free and preserved under composition,
it appears naturally adapted to the analysis of DNNs.
Using complex variable techniques mandates
\emph{holomorphic activation functions} in the DNNs, though.

Our goal is to find conditions on the underlying map $g$ 
and on the weights $ W^{(\ell)} \in \RR^{d_{\ell}\times d_{\ell-1}} $ 
and biases $ b^{(\ell)}\in \RR^{d_{\ell}}, \ell \in 1\ldots L$ of the DNN \eqref{def:dnn}, 
which ensure that the integrand in \eqref{eq:gerr} i.e. 
$\abs{g - \phi_{\theta}^{L}}^{2} \in \calW_{d,\alpha,\bmgamma, 1,\infty}$ 
uniformly with respect to the input dimension $d$.  
A sufficient condition relies on the concept of 
$(\bmbeta,p,\eps) $-holomorphy with $ 0 < p < 1 $ in the sense of \cite[Theorem 3.1]{DLS16}.  
We define this concept below. 
\begin{definition}[$ (\bmbeta,p,\eps)$-holomorphy on polytubes] 
\label{def:bpepsholo}\cite{DLS16,CCS15}
Let $ X $ be a Banach space over $ \CC $, $ \eps > 0  $, $ 0 < p < 1 $ and 
let $ \bmbeta \in \ell^p(\NN)$ be a non-negative sequence in $ \RR $. 
Define the \emph{polytubes} $ \calT_{\bmrho} = \bigtimes_{j\ge 1} \calT_{\rho_j} $,
where 

\begin{equation*}
\calT_{\rho_j} = \set{z \in \CC \colon \operatorname{dist}(z,[-1,1]) < \rho_j - 1} \qquad \rho_j \in (1,\infty).
\end{equation*}
\begin{subequations}\label{bpepseq}
We say that a $ \bmrho = (\rho_{j})_{j \in \NN} \in (1,\infty)^{\NN} $ is $  (\bmbeta,\eps) $-admissible, if there holds
\begin{equation}
\sum_{j \ge 1} (\rho_{j} - 1)\beta_{j} \le \eps \qquad \rho_j \in (1,\infty).
\end{equation}
 Define $U := [-1,1]^{\NN} \subset \calT_{\bmrho}$.
A map  $\phi\colon U \to X$ is called $(\bmbeta,p,\eps)$-holomorphic if:
\begin{enumerate}
\item 
for all $ \bmrho $ that is $ (\bmbeta,\eps) $-admissible,  
$ \phi $ admits holomorphic extension with respect to each variable
on the polytube $ \calT_{\bmrho}$, and
\item 
there exists a family of open sets $ \calO_{\bmrho} \supset \closure{\calT_{\bmrho}}$ 
and a constant $ C_{\eps} > 0 $  
independent of $ \bmrho$ such that there holds the uniform bound 
\begin{equation} 
\sup_{z \in \calO_{\bmrho}} \norm{\phi(z)}{X} \le C_{\eps}.
\end{equation}
\end{enumerate}  
\end{subequations}
\end{definition}
Definition \ref{def:bpepsholo} will be, in our case, 
accommodated by mapping $ U = [-1,1]^{\NN} $ to the domain $ [0,1]^{\NN}$, 
by means of an affine (in particular holomorphic) 
change of variables 
$ \xi \colon y \mapsto (y+1)/2$. 
Moreover, the infinite dimensional parameter space is an artifact 
to obtain expression rate bounds that are free from the curse of dimensionality. 
We define the underlying map $ \bar{g} \circ \xi : U \to \RR^{N_{\text{obs}}}$ 
as a function with infinitely many parameters and we view the quantity of interest as its truncated version via
\begin{equation}\label{bargtrunc}
g \circ \xi((y_1,\ldots,y_d))=\bar{g} \circ \xi((y_1,\ldots,y_d,0,0,\ldots)), \quad (y_1,\ldots,y_d) \in [-1,1]^d\;.
\end{equation} 
That is, we anchor the parameters after $ d $ to the center of their domain.
Note that holomorphy, thus analyticity of the integrand, also 
allows to recover super-polynomial convergence (with respect to $ N $) 
of the training error to the generalization error by training, 
for example,  on tensorized Gauss-points 
$\Gamma_{n,d} = \set{(y_1, \ldots, y_d) \in Y = [0,1]^d \colon y_i \in \Gamma_{n,1}, \ \forall i }$, 
where $ \Gamma_{n,1} $ denote the Gauss quadrature points in the interval $[0,1]$. 
However, the implied convergence of $ \calO(\exp(-r N^{1/d}))$, 
with $r>0$ independent of $N$ and of $d$,
deteriorates quickly as the parameter dimension $d$ 
increases and results in practically infeasible training designs,
for even moderate values of $d$. 
QMC training designs $\train$ only afford algebraic rates of convergence
     in terms of $N = \#(\train)$ which are free from the curse of dimensionality, though.

In order to investigate the holomorphy of neural networks \eqref{def:dnn}, 
we formally extend $ f_{W,b} $ in \eqref{def:fWb} for complex sequences  
$ z = y + i\eta \in \CC^{\NN}$, $ y,\eta \in \RR^{\NN} $ 
and semiinfinite (``sequence'') arrays 
$ W = \big (W_{ij}\big)_{\substack{1 \le i \le d \\1 \le j < \infty}} \in \RR^{d \times \NN} $, 
and we write
\begin{equation*}
	f_{W,b}(z) = \sigma(W z+ b).
\end{equation*}
We remark that we retain the network parameters $ W,b $ real valued. 
Verifications of the $ (\bmbeta,p,\eps) $-holomorphy of DNNs is 
considered in the following two sections.
\subsection{Quantified Holomorphy of Shallow Neural Networks}
\label{sec:HolShalNNs}
We start the study of holomorphy of DNNs by first considering 
the shallow ($ L = 2 $ in \eqref{def:dnn})  but possibly wide neural network. 
Let $ R > 0 $, we use the notation 
\begin{equation} \label{def:strip}
\calS_{R} = \set{z \in \CC \colon \abs{\Im(z)} < R }
\end{equation} 
for the \emph{strip} of width $ 2R $ around the real axis and 
$ \calS_{R}^d $ the $ d $-fold cartesian product of $ \calS_{R} $ with itself. 
Then we have the following proposition.
\begin{proposition}\label{prop:shallowholo}
Let $ R > 0 $, $\sigma\colon \calS_{R}^{k} \to \CC^{k}$ 
be holomorphic and $ 0<\eps < 2 R $. 
Assume given a sequence $ \bmbeta = (\beta_j)_{j\in \NN} \in \ell^{p}(\NN)$, 
with some $ 0< p < 1 $, $ b \in \RR^{k} $ 
and $ W = (W_{ij})_{\substack{1 \le i \le k \\1 \le j < \infty}} \in \RR^{k \times \NN} $. 

Then, if $ \max_{i= 1,\ldots,k}\abs{W_{ij}} \le \beta_j \ \forall j \in \NN $ there holds that
$ f_{W,b} \circ \xi\colon U \to \RR^{k}  $ is $ (\bmbeta,p,\eps) $-holomorphic on polytubes.
\end{proposition}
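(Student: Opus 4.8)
The plan is to write $f_{W,b}\circ\xi = \sigma\circ\Phi$, where $\Phi(z):=W\xi(z)+b$ is the (real-)affine map with components $\Phi_i(z)=\sum_{j\ge 1}W_{ij}(z_j+1)/2 + b_i$, and then to verify the two requirements of Definition~\ref{def:bpepsholo} by controlling the image of a polytube under $\Phi$. The guiding observation is that $\sigma$ is holomorphic precisely on the strip $\calS_R^k$, so it suffices to show that $\Phi$ maps the relevant polytube into a fixed compact subset of $\calS_R^k$. Holomorphy of $f_{W,b}\circ\xi$ then follows because $\sigma$ is composed with the map $\Phi$ that is entire in each variable, and the uniform bound follows from boundedness of $\sigma$ on that compact set.

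First I would fix a $(\bmbeta,\eps)$-admissible $\bmrho$ and estimate $\Phi$ on $\calT_{\bmrho}$. For $z_j\in\calT_{\rho_j}$ one has $\operatorname{dist}(z_j,[-1,1])<\rho_j-1$, hence both $\abs{\Im(z_j)}<\rho_j-1$ and $\abs{\Re(z_j)}<\rho_j$. Since $W,b$ are real and $\xi$ halves the imaginary part, the admissibility inequality $\sum_{j}(\rho_j-1)\beta_j\le\eps$ together with $\abs{W_{ij}}\le\beta_j$ gives, for each $i$,
\[
\abs{\Im(\Phi_i(z))}\le\tfrac12\sum_{j}\abs{W_{ij}}\,\abs{\Im(z_j)}<\tfrac12\sum_{j}\beta_j(\rho_j-1)\le\tfrac{\eps}2<R,
\]
so the image already lands in the open strip. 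The real part must also be controlled; here I would use that $\bmbeta\in\ell^p(\NN)\subset\ell^1(\NN)$ for $0<p<1$, whence
\[
\abs{\Re(\Phi_i(z))}\le\tfrac12\sum_{j}\beta_j(\rho_j+1)+\abs{b_i}\le\tfrac12\big(\eps+2\norm{\bmbeta}{\ell^1}\big)+\max_i\abs{b_i},
\]
a bound independent of $\bmrho$. The same $\ell^1$-summability makes the series defining each $\Phi_i$ converge absolutely and locally uniformly, so $\Phi$ is holomorphic in each variable; composing with the holomorphic $\sigma$ yields the holomorphic extension of $f_{W,b}\circ\xi$ on $\calT_{\bmrho}$, establishing condition~(1).

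For the uniform bound of condition~(2) I would exploit the strict gap $\eps<2R$ to pass to a slightly enlarged polytube. Choosing $\theta>0$ with $(1+\theta)\eps<2R$ and setting $\rho_j'-1:=(1+\theta)(\rho_j-1)$, the polytube $\calO_{\bmrho}:=\calT_{\bmrho'}$ is open and contains $\closure{\calT_{\bmrho}}$ (since $\rho_j'>\rho_j$), while the two displays above, with $\eps$ replaced by $(1+\theta)\eps$, continue to hold. Hence $\Phi(\calO_{\bmrho})$ is contained in the compact rectangle
\[
K=\set{w\in\CC^k:\abs{\Im(w_i)}\le\tfrac{(1+\theta)\eps}2,\ \abs{\Re(w_i)}\le\tfrac12\big((1+\theta)\eps+2\norm{\bmbeta}{\ell^1}\big)+\max_i\abs{b_i}}\subset\calS_R^k,
\]
which is independent of $\bmrho$. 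Setting $C_\eps:=\sup_{w\in K}\abs{\sigma(w)}$, finite since $\sigma$ is continuous on the compact set $K\subset\calS_R^k$, yields $\sup_{z\in\calO_{\bmrho}}\abs{f_{W,b}\circ\xi(z)}\le C_\eps$ uniformly in $\bmrho$, which is condition~(2).

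The step I expect to be the main obstacle is the uniform bound, not the holomorphy: because the strip $\calS_R$ is unbounded in the real direction, $\sigma$ need not be bounded on all of $\calS_R^k$, so the crux is to show that the real parts of $\Phi$ remain in a $\bmrho$-independent bounded set — this is exactly where the absolute summability $\bmbeta\in\ell^1$ enters — and then to produce a genuinely open neighborhood $\calO_{\bmrho}\supset\closure{\calT_{\bmrho}}$ carrying a $\bmrho$-uniform constant, which the margin $\eps<2R$ supplies.
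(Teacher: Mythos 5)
Your proposal is correct and takes essentially the same approach as the paper: the identical estimate $\abs{\Im\big((W\xi(z)+b)_i\big)} \le \tfrac12\sum_{j}\beta_j(\rho_j-1) \le \eps/2 < R$ establishes holomorphy on admissible polytubes, and the uniform bound is obtained, as in the paper, by passing to a slightly enlarged admissible polytube containing $\closure{\calT_{\bmrho}}$. Your handling of condition (2) is in fact a bit more explicit than the paper's: where the paper simply asserts that $f_{W,b}$ is bounded on the compact closure of the enlarged polytube, you use $\bmbeta\in\ell^1(\NN)$ to bound the real parts of the affine image and so exhibit a fixed compact rectangle $K\subset\calS_R^k$ on which $\sigma$ is bounded, which directly disposes of the worry that $\sigma$ need not be bounded on the (real-direction) unbounded strip.
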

\begin{proof}
Let $ \bmrho $ be a $  (\bmbeta,\eps) $-admissible sequence, 
$ z=y + i \eta \in \calT_{\bmrho} $ and let 
$\tilde{z} := W \xi(z) + b$.  
Since $ W,b $ are real valued there holds
\begin{equation*}
\Im(\tilde{z}) = \frac{1}{2}W \eta.
\end{equation*}
Note that 
$\eta_j = \Im(z_j) < \rho_j-1$ for all $z\in \calT_{\bmrho}$; 
hence for all $ i = 1,\ldots, k$,
we obtain
\begin{align} \label{affineinstrip}
\abs{\Im(\tilde{z}_{i})} = \frac{1}{2}\abs{\sum_{j = 1}^{d} W_{ij} \eta_{j}} 
< 
\frac{1}{2}\sum_{j = 1}^{d} \beta_{j} (\rho_j - 1) \le \frac{\eps}{2} < R.
\end{align}
Therefore $ \tilde{z} \in \calS_{R}^d $ and $ f_{W,b}\circ \xi $ is holomorphic on $ \calT_{\bmrho} $, so that we proved the first condition of Definition \ref{def:bpepsholo}. 
To verify the second condition, 
let $ \tilde{\bmrho} $ be $ (\bmbeta,(\eps + 2R)/2) $-admissible and 
satisfy $ \rho_j < \tilde{\rho}_j $, and 
set $ \calO_{\bmrho} = \calT_{\tilde{\bmrho}} $ in Definition \ref{def:bpepsholo}.  
Hence $ \calT_{\tilde{\bmrho}} \supsetneq \closure{ \calT_{\bmrho}} $ 
and
\begin{equation*}
\sup_{z \in \calT_{\tilde{\bmrho}} }
\abs{ f_{W,b}(z)} = \max_{z \in \closure{ \calT_{\tilde{\bmrho}}} }  \abs{ f_{W,b}(z)} 
\le 
C_{\eps},
\end{equation*}
where we used that for all 
$ z \in \closure{ \calT_{\tilde{\bmrho}}}$, 
$\tilde{z} = Wz + b$ there holds 
$\abs{\Im(\tilde{z}_{i})} \le (\eps + R)/2 $ for all $ i = 1, \ldots, k$ 
and hence $ f_{W,b}(z) $ is bounded on this compact set. 
Note that $ C_{\eps} $ depends on the output dimension $ k $ but is independent on the input dimension $ d $.
\end{proof}
We now verify the assumptions of Proposition \ref{prop:shallowholo} 
for a selection of popular activation functions in the following examples.
\begin{example}
Let $ \sigma $ given by $ \sigma(y) := \left (\frac{1}{1 + e^{-y_1}}, \ldots, \frac{1}{1 + e^{-y_d}} \right ) $. 
The logistic function $ \frac{1}{1+e^{-z}} $, $ z \in \CC $ is meromorphic  
with poles at $ z = \pi i + 2ni\pi, n \in \ZZ$. 
In particular it is holomorphic on the open strip $\calS_{\pi}$. 
Thus, $ \sigma $ is holomorphic on $ \calS_{\pi}^{d} $ by Hartogs Theorem and 
uniformly bounded on any compact set contained in $ \calS_{\pi}^{d} $. 
\end{example}

\begin{example}
Let $ \sigma $ given by $ \sigma(y) := \left (\tanh(y_1), \ldots, \tanh(y_d)\right ) $. The  function $ \tanh(z) $, $ z \in \CC $ is meromorphic with poles at $ z = \frac{\pi}{2} i + n i \pi, n \in \ZZ $. In particular it is holomorphic on $ \calS_{\frac{\pi}{2}} $. Thus, $ \sigma $ is holomorphic on the strip $ \calS_{\frac{\pi}{2}}^{d} $ by Hartogs Theorem and uniformly bounded on any compact set contained in $ \calS_{\frac{\pi}{2}}^{d} $. 
\end{example}

\begin{example}
Let $ \sigma $ be the softmax function $ \sigma(y):= \left (\frac{e^{y_1}}{\sum_{j}e^{y_j}}, \ldots, \frac{e^{y_d}}{\sum_{j}e^{y_j}}\right ) $. Each component $ \frac{e^{z_k}}{ \sum_j e^{z_j}} $, $ z \in \CC^{d} $ is also meromorphic. 
In particular, we show that it is holomorphic on the strip $ \calS_{\frac{\pi}{2}}^{d} $. 
In fact, writing $ e^{z_{k}} = e^{y_k}(\cos(\eta_k) + i\sin(\eta_k)) $, with $ y_{k}, \eta_{k} \in \RR $ yields 
\begin{equation*}
\sum_{k= 1}^{d} e^{z_k} = 0 \iff 
\begin{cases}
\sum_{k= 1}^{d} e^{y_k} \cos(\eta_k) = 0 & \\
\sum_{k= 1}^{d} e^{y_k} \sin(\eta_k) = 0 &
\end{cases}
,
\end{equation*}
but $ \cos(\eta_k) > 0 $ for all $ k $ due to $ z_k \in \calS_{\frac{\pi}{2}} $. Thus, $ \sum_{k= 1}^{d} e^{y_k} \cos(\eta_k) > 0 $.
\end{example}

\begin{lemma}\label{lem:compholo}
Let $ X,W $ be Banach spaces, $ \bmbeta \in \ell^p(\NN) $ and $ \eps> 0  $. Let $ B_{X}(R) \subseteq X $ be the open ball on $ X $ centered at the origin with radius $ R $. 
Assume that $ \phi \colon U \to X $ is $ (\bmbeta,p,\eps) $-holomorphic with uniform bound $ C_{\eps} $ on a family of sets $  \set{\calO_{\bmrho} : \ \bmrho  \text{ is } (\bmbeta,\eps)\text{-admissible} } $ in the sense of Definition \ref{def:bpepsholo}, with $ \calO_{\bmrho} \supset \closure{\calT_{\bmrho}} $ and assume that $ h \colon B_{X}(C_{\eps} + \delta) \to W $ is holomorphic for some $ \delta > 0$.

Then, $ h\circ\phi \colon U \to W $ is $ (\bmbeta,p,\eps) $-holomorphic.
\end{lemma}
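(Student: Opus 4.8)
The plan is to verify the two defining conditions of $(\bmbeta,p,\eps)$-holomorphy (Definition \ref{def:bpepsholo}) directly for $h\circ\phi$, reusing for both the \emph{same} admissible radii $\bmrho$ and the \emph{same} family of open sets $\calO_{\bmrho}\supset\closure{\calT_{\bmrho}}$ that witness the holomorphy of $\phi$, keeping $\bmbeta$, $p$ and $\eps$ unchanged. The decisive preliminary observation is that the uniform bound $\norm{\phi(z)}{X}\le C_{\eps}$ on $\calO_{\bmrho}$ forces $\phi$ to map the \emph{entire} family $\set{\calO_{\bmrho}}$ into the single closed ball $\closure{B_{X}(C_{\eps})}$, independently of $\bmrho$. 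Since $\closure{B_{X}(C_{\eps})}\subset B_{X}(C_{\eps}+\delta)=\operatorname{dom}(h)$, the composite is well defined on each $\calO_{\bmrho}$; this is exactly where the hypothesis $\delta>0$ enters, as it keeps the image of $\phi$ at distance $\delta$ from the boundary of the domain of $h$.

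For the first condition, I would fix an $(\bmbeta,\eps)$-admissible $\bmrho$, a coordinate index $j$, and freeze all variables other than $z_j$. By the holomorphy of $\phi$ the resulting slice $z_j\mapsto\phi(\dots,z_j,\dots)$ is an $X$-valued holomorphic map on $\calT_{\rho_j}$ whose image lies in $B_{X}(C_{\eps}+\delta)$. Composing with the Fr\'echet-holomorphic map $h$ and applying the chain rule for holomorphic maps between complex Banach spaces then shows $z_j\mapsto h(\phi(\dots,z_j,\dots))$ to be $W$-valued holomorphic on $\calT_{\rho_j}$. As $j$ and the frozen variables were arbitrary, $h\circ\phi$ admits a holomorphic extension with respect to each variable on $\calT_{\bmrho}$.

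For the second condition, I would retain the same sets $\calO_{\bmrho}$ and simply estimate
\begin{equation*}
\sup_{z\in\calO_{\bmrho}}\norm{h(\phi(z))}{W}
\le
\sup_{x\in\closure{B_{X}(C_{\eps})}}\norm{h(x)}{W}
=:
C_{\eps}' ,
\end{equation*}
the inequality holding because $\phi(\calO_{\bmrho})\subseteq\closure{B_{X}(C_{\eps})}$. The right-hand side is manifestly independent of $\bmrho$, so it remains only to argue that $C_{\eps}'<\infty$.

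The finiteness of $C_{\eps}'$ is the one genuinely technical point, and I expect it to be the main obstacle: in infinite dimensions a closed ball is not compact, so boundedness of $h$ on $\closure{B_{X}(C_{\eps})}$ does not follow from continuity alone. Here the gap $\delta>0$ is essential, since it places $\closure{B_{X}(C_{\eps})}$ strictly (in the norm sense) inside the open domain $B_{X}(C_{\eps}+\delta)$ on which $h$ is holomorphic, and I would conclude boundedness from the corresponding boundedness property of Banach-space holomorphic functions, equivalently from Cauchy estimates and bounded-type behaviour on balls interior to the domain of holomorphy. Once $C_{\eps}'<\infty$ is secured, both conditions of Definition \ref{def:bpepsholo} hold for $h\circ\phi$ with the family $\set{\calO_{\bmrho}}$ and the uniform constant $C_{\eps}'$, which completes the argument.
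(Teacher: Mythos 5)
Your proposal reproduces the paper's proof almost step for step: same reuse of the admissible $\bmrho$ and of the family $\set{\calO_{\bmrho}}$, well-definedness of the composite from $\phi(\calO_{\bmrho})\subseteq \closure{B_{X}(C_{\eps})}\subset B_{X}(C_{\eps}+\delta)$, holomorphy on $\calT_{\bmrho}$ as a composition of holomorphic maps (your slice-wise chain-rule argument is just a spelled-out version of the paper's one-line claim), and the uniform bound
\begin{equation*}
\sup_{z\in\calO_{\bmrho}}\norm{h\circ\phi(z)}{W}\le \sup_{\psi\in\closure{B_{X}(C_{\eps})}}\norm{h(\psi)}{W}=:C_{\eps}' .
\end{equation*}
The only point where you depart from the paper is the step you yourself flag as the main obstacle, namely why $C_{\eps}'<\infty$, and there your proposed justification does not work.

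You claim that finiteness of $C_{\eps}'$ follows from ``Cauchy estimates and bounded-type behaviour on balls interior to the domain of holomorphy''. For infinite-dimensional $X$ this principle is \emph{false}: holomorphy of $h$ on $B_{X}(C_{\eps}+\delta)$ does not imply boundedness of $h$ on the smaller closed ball, no matter how large $\delta$ is. A standard counterexample is $X=\ell^{2}$ and $h(x)=\sum_{n\ge 1} n\,(x_{n}/C_{\eps})^{n}$: this function is holomorphic on all of $X$ (it is G\^{a}teaux holomorphic and locally bounded, since the tail coordinates of any center are eventually small), yet $h(C_{\eps}e_{n})=n$, so $h$ is unbounded on $\closure{B_{X}(C_{\eps})}$. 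This is the well-known phenomenon that the radius of boundedness of a Banach-space holomorphic function can be strictly smaller than its radius of holomorphy; the $\delta$-margin alone cannot repair it, so as a proof of the lemma for arbitrary Banach spaces your last step is a genuine hole. To be fair, the paper's own proof stumbles at exactly the same spot, in mirror image: it declares that $h$ ``is bounded on this compact set'', but $\closure{B_{X}(C_{\eps})}$ is never compact when $\dim X=\infty$ (Riesz). Both arguments become rigorous precisely when $X$ is finite-dimensional, which is the only situation in which the lemma is actually invoked (Proposition \ref{prop:generalizSPOD} applies it with $X=\CC^{d_{1}}$ and $h$ the holomorphic extension of $x\mapsto\abs{x}^{q}$); there the closed ball is compact and continuity of $h$ yields the bound. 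At the stated level of generality one would instead need an extra ingredient, e.g.\ the hypothesis that $h$ is bounded on bounded sets (holomorphy of bounded type), or an argument that the image of a $(\bmbeta,p,\eps)$-holomorphic $\phi$ is precompact in $X$; neither appears in your proposal or in the paper.
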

\begin{proof}
	Fix a $ \bmrho $ that is $ (\bmbeta,\eps) $-admissible, and a $ z \in \calO_{\bmrho} $ then	
	\begin{equation*}
		\norm{h\circ\phi(z)}{W} \le \sup_{\norm{\psi}{X} \le C_{\eps} } \norm{h(\psi)}{W} = \sup_{\psi  \in \closure{B_{X}(C_{\eps})} } \norm{h(\psi)}{W} \le \hat{C}_{\eps}
	\end{equation*}
	since $ h $ is bounded on this compact set. Thus
	\begin{equation*}
	\sup_{z \in \calO_{\bmrho}} \norm{h\circ\phi(z)}{W} \le \hat{C}_{\eps}
	\end{equation*}
	and $ \phi(\calT_{\bmrho}) \subseteq B_{X}(C_{\eps} + \delta ) $ 
implies that $ h \circ \phi $ is well defined on $ \calT_{\bmrho} $ and holomorphic as composition of holomorphic functions.
\end{proof}

\begin{proposition} \label{prop:generalizSPOD}
Let $ q \in 2{\mathbb N}$ be even,  
$ \bar{g}\circ \xi :U \to \CC^{d_{1}} $ be $ (\bmbeta,p,\eps) $-holomorphic for some $ 0 < p < 1 $ 
and 
assume that $f_{W^{(1)},b^{(1)}}$ satisfies 
the hypotheses of Proposition \ref{prop:shallowholo}.

 Then, 
for any shallow neural network $ \phi_{\theta} =  W^{(2)} f_{ W^{(1)},b^{(1)} } + b^{(2)}$ 
there exist $ C > 0, J \in \NN $ independent of $ d $  such that
\begin{equation*}
\norm{(g - \phi_{\theta})^q}{d,\alpha,\bmgamma,1,\infty} \le C
\end{equation*} 
for some SPOD weights $ \bmgamma $ defined by 
\begin{equation}\label{def:weightsSPOD}
\gamma_{\frku} := \sum_{\bm{\nu} \in \set{1:\alpha}^{\abs{\frku}}} \abs{\bm{\nu}}! \prod_{j \in \frku} \left ( 2^{\delta(\nu_j, \alpha )}  \tilde{\beta}_{j}^{\nu_j}\right )
\end{equation}
where $\tilde{\beta}_{j} = 2^{\alpha+2} \norm{\beta}{\ell^1(\NN)}/\eps$ 
$ \forall j \le J $ and $ 0 < \tilde{\beta}_{j} \le c \beta_{j} $ if $ j > J $. 
Moreover, the constant $ c > 0 $ is independent of $ d,\bm{\nu},\bmy $ and $ j $.
\end{proposition}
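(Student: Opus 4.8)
The plan is to prove the statement in two stages. First, I would establish that the (complex extension of the) error map $g-\phi_{\theta}$, and then its $q$-th power, is $(\bmbeta,p,\eps)$-holomorphic in the sense of Definition~\ref{def:bpepsholo}. Second, I would convert this holomorphy into the weighted-Sobolev bound \eqref{def:wNorm} by way of Cauchy estimates on admissible polytubes, which is precisely the mechanism of \cite[Theorem 3.1]{DLS16}. The first stage is where the hypotheses of the proposition enter; the second stage is responsible for the explicit SPOD weights \eqref{def:weightsSPOD} and for the dimension independence of the constant $C$.

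For the holomorphy stage I would argue as follows. By Proposition~\ref{prop:shallowholo}, the bound $\max_i\abs{W^{(1)}_{ij}}\le\beta_j$ guarantees that $f_{W^{(1)},b^{(1)}}\circ\xi\colon U\to\RR^{d_1}$ is $(\bmbeta,p,\eps)$-holomorphic, once we fix a common $\eps$ with $0<\eps<2R$, where $R$ is the half-width of the strip $\calS_R$ on which the scalar activation is holomorphic (as in the Examples following Proposition~\ref{prop:shallowholo}). Post-composition with the entire real affine map $\psi\mapsto W^{(2)}\psi+b^{(2)}$ then yields, via Lemma~\ref{lem:compholo}, that $\phi_{\theta}\circ\xi$ is again $(\bmbeta,p,\eps)$-holomorphic. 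The DtO map $g\circ\xi$ is $(\bmbeta,p,\eps)$-holomorphic because it is the truncation \eqref{bargtrunc} of the holomorphic $\bar g\circ\xi$, and anchoring the tail coordinates at the centre of their domain is a holomorphic restriction. Since on any common admissible $\calT_{\bmrho}$ a difference of two $(\bmbeta,p,\eps)$-holomorphic maps is holomorphic, with uniform bounds adding on $\calO_{\bmrho}^{g}\cap\calO_{\bmrho}^{\phi}\supset\closure{\calT_{\bmrho}}$, the error map $g-\phi_{\theta}$ is $(\bmbeta,p,\eps)$-holomorphic with some uniform bound $C_{\eps}$. Finally, because $q$ is even, forming either the componentwise $q$-th power or the $(q/2)$-th power of the squared Euclidean norm $\sum_i(g_i-\phi_{\theta,i})^2$ amounts to post-composition with an entire function ($w\mapsto w^{q}$, resp. $w\mapsto w^{q/2}$), holomorphic on a ball of radius $C_{\eps}+\delta$; Lemma~\ref{lem:compholo} then gives that $(g-\phi_{\theta})^{q}$ is $(\bmbeta,p,\eps)$-holomorphic.

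For the norm stage I would extract pointwise derivative bounds from holomorphy. Fix a multi-index $\bm{\mu}=(\bm{\nu}_{\frku\setminus\frkv},\alpha_{\frkv})$ appearing in \eqref{def:wNorm}, with $\supp(\bm{\mu})=\frku$, entries $\alpha$ on $\frkv$ and entries in $\set{1:\alpha}$ on $\frku\setminus\frkv$. The coordinatewise Cauchy integral formula applied to $F:=(g-\phi_{\theta})^{q}$ on an admissible polytube $\calT_{\bmrho}$ (on whose enlargement $\calO_{\bmrho}$ the uniform bound $\hat C_{\eps}$ holds) gives $\abs{\partial_y^{\bm{\mu}}F(y)}\le \hat C_{\eps}\,\bm{\mu}!\prod_{j}(\rho_j-1)^{-\mu_j}$. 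Optimising the admissible radii \emph{per multi-index} via $\rho_j-1=\abs{\bm{\mu}}^{-1}\eps\,\mu_j/\beta_j$ on $\frku$ (which saturates $\sum_j(\rho_j-1)\beta_j\le\eps$, with $\rho_j\downarrow 1$ off $\frku$), together with Stirling's bound $\abs{\bm{\mu}}^{\abs{\bm{\mu}}}\le e^{\abs{\bm{\mu}}}\abs{\bm{\mu}}!$, produces the SPOD-type estimate $\abs{\partial_y^{\bm{\mu}}F(y)}\le \hat C_{\eps}\,\abs{\bm{\mu}}!\prod_{j}(e\beta_j/\eps)^{\mu_j}$. Summing over $\frkv\subseteq\frku$ and $\bm{\nu}_{\frku\setminus\frkv}\in\set{1:\alpha}^{\abs{\frku\setminus\frkv}}$ and reindexing by $\bm{\mu}\in\set{1:\alpha}^{\abs{\frku}}$ reproduces the weight \eqref{def:weightsSPOD}: each coordinate with $\mu_j=\alpha$ is double-counted (either $j\in\frkv$, or $j\in\frku\setminus\frkv$ with $\nu_j=\alpha$), which is exactly the source of the factor $2^{\delta(\nu_j,\alpha)}$. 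Dividing by $\gamma_{\frku}$ and taking the supremum over $\frku$ then bounds \eqref{def:wNorm} by a constant, provided $\tilde\beta_j$ dominates $e\beta_j/\eps$ and lies in $\ell^p$.

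The main obstacle is the dimension-robust bookkeeping that forces the two-regime weights and the sharper $\abs{\bm{\mu}}!$ (rather than $\bm{\mu}!$) factor. A single, multi-index-independent choice of $\bmrho$ cannot simultaneously deliver factors $\propto\beta_j$ for infinitely many $j$, since then $\sum_j(\rho_j-1)\beta_j$ would diverge; the proportional behaviour $\tilde\beta_j\le c\beta_j$ that secures $\tilde\beta\in\ell^p$, and hence the independence of $C$ from $d$, is only available through the per-multi-index optimisation above. For the finitely many coordinates with the largest $\beta_j$ — indexed by a $d$-independent cutoff $J$ chosen so that $\sum_{j>J}(\rho_j-1)\beta_j$ stays below $\eps$ with proportional radii — one instead absorbs the contribution into the uniform constant $\tilde\beta_j=2^{\alpha+2}\norm{\beta}{\ell^1(\NN)}/\eps$, whose extra powers of two account for the $\alpha$-fold derivatives on $\frkv$, the Jacobian $1/2$ of $\xi$, and the Stirling factor $e$. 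Since enlarging the weights only increases $\gamma_{\frku}$, this substitution preserves the bound while yielding the stated form of $\tilde\beta_j$ with $C$, $J$ and $c$ all independent of $d$. All of this is packaged in \cite[Theorem 3.1]{DLS16}, which may alternatively be invoked verbatim once the $(\bmbeta,p,\eps)$-holomorphy of $(g-\phi_{\theta})^{q}$ established in the first stage is in hand.
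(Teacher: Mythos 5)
Your proposal is correct and takes essentially the same route as the paper's proof: holomorphy of $(\bar g-\phi_{\theta})\circ\xi$ via Proposition~\ref{prop:shallowholo} together with the entirety of the output affine map, then holomorphy of the even power $\abs{\,\cdot\,}^{q}$ via Lemma~\ref{lem:compholo}, and finally the weighted-norm bound with the two-regime (exceptional set $E=\set{1,\ldots,J}$ plus tail $\tilde\beta_j\le c\beta_j$) SPOD weights from \cite[Theorem 3.1, Remark 4.3]{DLS16}. The only difference is that you unpack the Cauchy-estimate/radius-optimisation mechanism inside that cited theorem, which the paper invokes as a black box --- and you note yourself that it can be invoked verbatim, so the two arguments coincide in substance.
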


\begin{proof}
Since $ f_{W^{(1)},b^{(1)}}  \circ \xi $ is $ (\bmbeta,p,\eps) $-holomorphic 
by Proposition \ref{prop:shallowholo} 
and since affine transformations are holomorphic on the entire complex plane,
it is easily verified that 
$(\bar{g} - \phi_{\theta}) \circ \xi$ 
is  
$(\bmbeta,p,\eps)$-holomorphic for the same sequence $ \bmbeta $ and $ \eps $. 

Furthermore, the assumption that
$q$  is an even integer ensures that the 
map $h \colon \RR^{d_{1}} \to \RR $,  $x \mapsto \abs{x}^{q}$ 
admits holomorphic continuation on $ \CC^{d_{1}}$. 
Hence, Lemma \ref{lem:compholo} and \cite[Theorem 3.1, Remark 4.3]{DLS16} 
imply that there exists a finite set $ E:= \set{1,\ldots,J} \subset \NN $ such that

\begin{align*}
\norm{(g - \phi_{\theta})^q}{d,\alpha,\bmgamma,1,\infty} & \le C \sup_{\frku \in \set{1:d}} \frac{1}{\gamma_{\frku}} \sum_{\bm{\nu} \in \set{1:\alpha}^{\abs{\frku}}} \bm{\nu}_{\frku \cap E}! \prod_{j \in \frku \cap E} \left ( 2^{\delta(\nu_j, \alpha )} \tilde{\beta}_{j}^{\nu_j}\right ) \abs{\bm{\nu}_{\frku \cap E^c}}! \prod_{j \in \frku \cap E^c} \left ( 2^{\delta(\nu_j, \alpha )} \tilde{\beta}_{j}^{\nu_j}\right ) \\
& \le C \sup_{\frku \in \set{1:d}} \frac{1}{\gamma_{\frku}} \sum_{\bm{\nu} \in \set{1:\alpha}^{\abs{\frku}}} \abs{\bm{\nu}}! \prod_{j \in \frku} \left ( 2^{\delta(\nu_j, \alpha )} \tilde{\beta}_{j}^{\nu_j}\right )
\end{align*} 
for a $ C $ independent of the dimension $ d $ but dependent on $ \norm{\bmbeta}{\ell^1(\NN)} $ and $ \eps $, which proves the claim upon choosing the weights $ \bmgamma $ as in \eqref{def:weightsSPOD}.
\end{proof}
\subsection{Quantified Holomorphy of Deep Neural Networks}
\label{sec:HolomDNN}
In this section we prove 
$ (\bmbeta,p, \eps) $-holomorphy for a class of DNNs, with $ L \ge 3 $ in \eqref{def:dnn},
to which Proposition \ref{prop:generalizSPOD} can be extended.
We verify $ (\bmbeta,p, \eps) $-holomorphy provided that 
a summability condition on the network weights holds.
\begin{proposition} \label{prop:deepholo}
Let $ R,R' > 0 $, 
$\sigma \colon \calS_{R}^{d_{\ell}} \to \calS_{R'}^{d_{\ell}}$ be holomorphic 
and let $ 0 < \eps < 2R$ be given.
Assume given a 
sequence 
$ \bmbeta = (\beta_{j})_{j \in \NN} \in \ell^p(\NN)$, 
with some $ 0 < p < 1 $, 
and with the network parameters 
$b^{(\ell)} \in \RR^{d_{\ell}} \ \forall \ell \ge 1$,  
$W^{(1)} \in \RR^{d_1 \times \NN}$ 
and 
$ W^{(\ell)} \in \RR^{d_{\ell} \times d_{\ell-1}} $ for $ \ell \ge 2 $. 
Assume that $ \max_{i= 1,\ldots,d_{1}} \abs{W_{ij}^{(1)}} \le \beta_{j} \  \forall j \in \NN $ 
and that for all $ \ell = 2,\ldots,L-1 $ there holds
\begin{equation}\label{normW}
\max_{i= 1,\ldots,d_{\ell}} \sum_{j = 1}^{d_{\ell-1}} \abs{W_{ij}^{(\ell)}} \le \frac{R}{R'}.
\end{equation} 
 
Then, 
$ \phi_{\theta}^L \circ \xi $ is $ (\bmbeta,p,\eps) $-holomorphic on polytubes.
\end{proposition}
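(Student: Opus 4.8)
The plan is to argue by induction on the layers, peeling off one layer at a time and propagating the $(\bmbeta,p,\eps)$-holomorphy established for the first layer through the remaining hidden layers by composition. Write the network as $\phi_{\theta}^{L}\circ\xi = A_{L}\circ f_{W^{(L-1)},b^{(L-1)}}\circ\cdots\circ f_{W^{(1)},b^{(1)}}\circ\xi$, where $A_{L}(x)=W^{(L)}x+b^{(L)}$ is the affine read-out layer. Set $\Phi_{1}:=f_{W^{(1)},b^{(1)}}\circ\xi$ and $\Phi_{\ell}:=f_{W^{(\ell)},b^{(\ell)}}\circ\Phi_{\ell-1}$ for $2\le\ell\le L-1$. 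The inductive hypothesis I would carry is twofold: (i) $\Phi_{\ell}$ is $(\bmbeta,p,\eps)$-holomorphic, and (ii) for every $(\bmbeta,\eps)$-admissible $\bmrho$ the image $\Phi_{\ell}(\calO_{\bmrho})$ is a bounded subset of the strip $\calS_{R'}^{d_{\ell}}$, with $\calO_{\bmrho}$ the enlarged polytubes supplied by Proposition \ref{prop:shallowholo}. The whole point of the second invariant is that it is precisely what lets the next activation be applied.

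For the base case, $\Phi_{1}$ is $(\bmbeta,p,\eps)$-holomorphic by Proposition \ref{prop:shallowholo}, whose hypothesis $\max_{i}\abs{W^{(1)}_{ij}}\le\beta_{j}$ is assumed; its proof also shows (using $\eps<2R$) that the affine pre-activation stays in $\calS_{R}^{d_{1}}$ on $\calO_{\bmrho}$, so after $\sigma$ the image lies in $\calS_{R'}^{d_{1}}$, while the uniform bound $C_{\eps}$ gives boundedness there. For the inductive step the key computation is the strip-preservation estimate: for $x\in\calS_{R'}^{d_{\ell-1}}$, since $W^{(\ell)},b^{(\ell)}$ are real and $\abs{\Im(x_{j})}<R'$,
\[
\abs{\Im\big(W^{(\ell)}x+b^{(\ell)}\big)_{i}} = \abs{\sum_{j} W^{(\ell)}_{ij}\,\Im(x_{j})} \le \sum_{j}\abs{W^{(\ell)}_{ij}}\,\abs{\Im(x_{j})} < R'\sum_{j}\abs{W^{(\ell)}_{ij}} \le R,
\]
where the last step is exactly \eqref{normW}. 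Hence $x\mapsto W^{(\ell)}x+b^{(\ell)}$ maps $\calS_{R'}^{d_{\ell-1}}$ into $\calS_{R}^{d_{\ell}}$, on which $\sigma$ is holomorphic and lands back in $\calS_{R'}^{d_{\ell}}$; thus $f_{W^{(\ell)},b^{(\ell)}}$ is holomorphic on the whole strip $\calS_{R'}^{d_{\ell-1}}$ and maps it into $\calS_{R'}^{d_{\ell}}$.

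To close the induction I would compose $f_{W^{(\ell)},b^{(\ell)}}$ with $\Phi_{\ell-1}$ via (a strip-domain variant of) Lemma \ref{lem:compholo}: by (i)--(ii) the image $\Phi_{\ell-1}(\calO_{\bmrho})$ is a bounded subset of $\calS_{R'}^{d_{\ell-1}}$, so $f_{W^{(\ell)},b^{(\ell)}}$ is holomorphic and, by continuity, bounded on the closure of this region; the composition is therefore $(\bmbeta,p,\eps)$-holomorphic with image again a bounded subset of $\calS_{R'}^{d_{\ell}}$, restoring both invariants. The read-out layer $A_{L}$ is affine, hence entire, so composing it with $\Phi_{L-1}$ preserves $(\bmbeta,p,\eps)$-holomorphy and yields the claim for $\phi_{\theta}^{L}\circ\xi$. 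Joint (as opposed to separate) holomorphy throughout is guaranteed by Hartogs' theorem together with the local boundedness just recorded.

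I expect the main obstacle to be the domain bookkeeping in the composition step rather than any single estimate. Lemma \ref{lem:compholo} is stated for an outer map holomorphic on a full ball $B_{X}(C_{\eps}+\delta)$, whereas each hidden layer $f_{W^{(\ell)},b^{(\ell)}}$ is only holomorphic on the strip $\calS_{R'}^{d_{\ell-1}}$ --- on a genuine ball of radius $C_{\eps}$ the activation (e.g.\ $\tanh$) would generically hit its poles. The resolution, and the reason the two-part inductive hypothesis is indispensable, is that \eqref{normW} forces the image of every partial network to remain inside $\calS_{R'}$, so the outer map only ever has to be holomorphic on the bounded strip-region actually visited; the proof of Lemma \ref{lem:compholo} uses holomorphy and boundedness of the outer map only on (a neighborhood of) the closure of the inner map's image, so it goes through with $B_{X}(C_{\eps}+\delta)$ replaced by such a region. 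Tracking this strip containment uniformly on the enlarged polytubes $\calO_{\bmrho}$, and not merely on $\calT_{\bmrho}$, is the one place where care is needed to secure the uniform bound in the second condition of Definition \ref{def:bpepsholo}.
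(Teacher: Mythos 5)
Your overall route is the same as the paper's: the first layer is handled by Proposition \ref{prop:shallowholo}, the hidden layers by exactly the strip-preservation estimate $\abs{\Im(W^{(\ell)}x+b^{(\ell)})_i}\le \tfrac{R}{R'}\max_j\abs{\Im(x_j)}$ drawn from \eqref{normW}, and the network is then a composition of holomorphic maps, with the affine read-out layer harmless; the paper never invokes Lemma \ref{lem:compholo} here, and neither do you in substance. The genuine problem lies in how you verify the uniform bound (condition 2 of Definition \ref{def:bpepsholo}). Your invariant (ii) records that $\Phi_{\ell-1}(\calO_{\bmrho})$ is a \emph{bounded} subset of the open strip $\calS_{R'}^{d_{\ell-1}}$, and you conclude that $f_{W^{(\ell)},b^{(\ell)}}$ is ``by continuity, bounded on the closure of this region''. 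That step is invalid: the closure of a bounded subset of the open strip may meet the boundary $\abs{\Im x_j}=R'$, and at such points the pre-activation can satisfy $\abs{\Im(W^{(\ell)}x+b^{(\ell)})_i}=R$ (note \eqref{normW} is not strict), i.e.\ it leaves the domain of $\sigma$ altogether. Worse, the hypothesis that $\sigma\colon\calS_R^{d_\ell}\to\calS_{R'}^{d_\ell}$ is holomorphic bounds only $\Im(\sigma)$, not $\sigma$ itself: for instance $\sigma(z)=i\pi/2-\log(iR-z)$ maps $\calS_R$ holomorphically into $\calS_{\pi/2}$ yet is unbounded on bounded subsets of $\calS_R$ approaching the boundary point $iR$. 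So ``bounded image inside the strip'' cannot, by itself, propagate a finite supremum to the next layer; boundedness is the wrong invariant.

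What closes the argument --- and what the paper does --- is to keep the bookkeeping on the domain side rather than the image side: fix $\tilde{\bmrho}$ that is $(\bmbeta,(\eps+2R)/2)$-admissible with $\rho_j<\tilde{\rho}_j$, set $\calO_{\bmrho}=\calT_{\tilde{\bmrho}}$, and observe that the entire composition is defined and continuous on the compact set $\closure{\calT_{\tilde{\bmrho}}}$: at the first layer the pre-activations satisfy $\abs{\Im}\le(\eps+2R)/4<R$ with a uniform margin, and at the hidden layers they lie strictly inside $\calS_R$ pointwise by the estimate above. Compactness then does what boundedness cannot: the continuous image of $\closure{\calT_{\tilde{\bmrho}}}$ under the first layer is compact, a compact subset of an open strip has positive distance to its boundary, the next affine map therefore sends it to a compact subset of $\calS_R^{d_\ell}$ on which $\sigma$ is bounded, and so on through the layers, yielding a finite supremum $C_{\eps}$ independent of $\bmrho$. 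You correctly identified this as the delicate point, and the repair is local --- replace ``bounded image of the open polytube'' by ``compact image of the closed enlarged polytube'' --- but as written your induction does not establish the uniform bound.
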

\begin{proof}
By Proposition \ref{prop:shallowholo} $ f_{W^{(1)},b^{(1)}} \circ \xi $ is $ (\bmbeta,p,\eps) $-holomorphic. Moreover, \eqref{affineinstrip} and the hypothesis that $ \sigma \colon \calS_{R}^{d_{\ell}} \to \calS_{R'}^{d_{\ell}} $ give that the image of $ \calT_{\bmrho} $ under the map $ f_{W^{(1)},b^{(1)}} \circ \xi $ is contained in $ \calS_{R'}^{d_{1}} $ for all $ (\bmbeta,\eps) $-admissible sequences $ \bmrho $. 
Next, for fixed $ \ell \in \set{2,\ldots,L-1} $, we define $ \tilde{z} = W^{(\ell)}z + b^{(\ell)} $. Then, since $ W^{(\ell)},b^{(\ell)} $ are real valued, \eqref{normW} gives
\begin{align*}
\max_{i= 1,\ldots,d_{\ell}}\abs{\Im(\tilde{z_i})} & 
= \max_{i= 1,\ldots,d_{\ell}}\abs{\sum_{j = 1}^{d_{\ell-1}}W_{ij}^{(\ell)}\Im( z_j )}
\le \frac{R}{R'} \max_{j= 1,\ldots,d_{\ell-1}} \abs{\Im (z_j)},
\end{align*}
which implies that the affine transformations for $ \ell= 2,\ldots,L-1$ 
map $ \calS_{R'}^{d_{\ell-1}} $ to $ \calS_{R}^{d_{\ell}}$. 
Hence, the network map $ \phi_{\theta}^L \circ \xi $ is well defined and holomorphic 
as composition of holomorphic functions on such $ \calT_{\bmrho}$. 

The second condition of $(\bmbeta,p,\eps)$-holomorphy 
is verified analogously to Proposition \ref{prop:shallowholo}: 
for a polyradius $ \tilde{\bmrho} $ that is 
$(\bmbeta,(\eps + 2R)/2)$-admissible and satisfies 
$\rho_j < \tilde{\rho}_j $ there holds 
$\closure{ \calT_{\bmrho}}\subseteq  \calT_{\tilde{\bmrho}}$ 
and $ \phi_{\theta}^{L} \circ \xi $ is continuous, 
hence bounded on $ \closure{\calT_{\tilde{\bmrho}}}$ 
by some constant $ C_{\eps} $ independent of $ \bmrho $.
\end{proof}

\begin{remark}\label{remark:archdep}
It is of interest to track the dependence of the uniform bound $ C_{\eps} $ on the architecture. The hypotheses of Proposition \ref{prop:deepholo} set a constraint on the width of the inner layers of the network. Very wide networks are still possible, but at the price of smaller weights allowed in the analysis. On the other hand, no formal limitation is imposed on the biases $  b^{(\ell)}, \ell = 1,\ldots,L $ and on the output weight matrix $ W^{(L)} $, although these parameters also contribute to the bound $ C_{\eps} $. Furthermore, $ C_{\eps} $ is completely independent of the number of layers $ L $ provided that \eqref{normW} holds for all the intermediate layers.
\end{remark}
The abstract assumptions on the activation function $ \sigma $ 
in Proposition \ref{prop:deepholo} are next verified for a 
selection of widely used activation functions.
\begin{example}
Let $ \sigma $ be the standard logistic function. Then for $ z = y + i \eta \in \CC $ there holds
\begin{align*}
\Im(\sigma(z)) &= \Im\left (\frac{1}{1 + e^{- z} }\right ) 
=  \Im\left (\frac{1 + e^{-y + i \eta}}{(1 + e^{-y - i \eta} ) (1 + e^{-y + i \eta}) }\right ) \\
& = \Im\left (\frac{1 + e^{-y + i \eta}}{1 + e^{ - 2y} + 2e^{-y}\cos(\eta) }\right ) = \frac{1}{1 + e^{-2y} + 2e^{-y}\cos(\eta)} \Im (1 + e^{-y + i \eta}) \\
& = \frac{ e^{-y} \sin(\eta) }{1 + e^{ - 2y} + 2e^{- y}\cos(\eta)} .
\end{align*}
Thus if $ \abs{\eta} < \frac{\pi}{2} $, this gives $ \abs{\Im(\sigma(z))} < \frac{1}{2} $. Hence, $ \sigma \colon \calS_{R}^{d_{\ell}} \to \calS_{R'}^{d_{\ell}} $ is holomorphic for $ R = \frac{\pi}{2} $, $ R' = \frac{1}{2} $ and Proposition \ref{prop:deepholo} applies.
\end{example}

\begin{example}
Let $ \sigma $ be the hyperbolic tangent function. 
Then for $ z = y + i \eta \in \CC $  there holds
\begin{align*}
\Im(\sigma(z)) &= \Im\left (\frac{1 - e^{ - 2z}}{1 + e^{ - 2z} }\right ) 
=  \Im\left (\frac{(1 - e^{-2(y + i \eta)})(1 + e^{-2(y - i \eta)})}{(1 + e^{-2(y + i \eta)})(1 + e^{-2(y - i \eta)}) }\right ) \\
& = \Im\left (\frac{1 - e^{-4y} + 2 i e^{-2y}\sin(2 \eta)}{1 + e^{ - 4y} + 2e^{-2y}\cos(2 \eta) }\right ) = \frac{2 e^{-2y}\sin(2 \eta)}{1 + e^{ - 4y} + 2e^{-2y}\cos(2 \eta)}.
\end{align*}
Thus if $ \abs{\eta} < \frac{\pi}{4} $, this gives $ \abs{\Im(\sigma(z))} < 1 $. Hence, $ \sigma \colon \calS_{R}^{d_{\ell}} \to \calS_{R'}^{d_{\ell}} $ is holomorphic for $ R = \frac{\pi}{4} $, $ R' = 1 $ and Proposition \ref{prop:deepholo} applies.
\end{example}

\begin{example}
Let $ \sigma $ be the softmax activation function. 
Then, for $ z = y + i \eta \in \CC^{d_{\ell}} $ and for all components $ k = 1,\ldots,d_{\ell}$i,
there holds 
\begin{align*}
\Im(\sigma(z_{k})) &= \Im \left (\frac{e^{z_{k}} }{\sum_j e^{z_j}}\right ) = \frac{1 }{\abs{\sum_j e^{z_j}}^2} \Im \left (e^{z_{k}} \left (\sum_{j} e^{y_j} \cos(\eta_{j}) - i \sum_{j} e^{y_j} \sin(\eta_{j}) \right )\right ) \\
& = \frac{ e^{y_{k}} \sin(\eta_{k}) \sum_{j} e^{y_{j}}\cos(\eta_{j}) - e^{y_{k}} \cos(\eta_{k}) \sum_{j} e^{y_{j}}\sin(\eta_{j}) }{\left (\sum_j e^{y_j} \cos(\eta_{j})\right )^2 + \left (\sum_j e^{y_j} \sin(\eta_{j})\right )^2}.
\end{align*}
Thus, if $ \abs{\eta} < \frac{\pi}{4} $, this gives
\begin{equation*}
\abs{\Im(\sigma(z_{k}))} < \frac{\sqrt{2} e^{y_{k}}}{\frac{1}{2}\sum_j e^{y_j}} \le 2 \sqrt{2}.
\end{equation*}
Hence, $ \sigma \colon \calS_{R}^{d_{\ell}} \to \calS_{R'}^{d_{\ell}} $ is holomorphic for $ R = \frac{\pi}{4} $, $ R' = 2 \sqrt{2} $ and Proposition \ref{prop:deepholo} applies.
\end{example}

\begin{remark}
To simplify the notation, we restricted the present discussion to the case when all the activation functions are of 
the same type. 
However, the proof of Proposition \ref{prop:deepholo} remains valid verbatim 
if we use different activations $ \sigma^{(\ell)} \colon \calS_{R_{\ell}}^{d_{\ell}} \to \calS_{R'_{\ell}}^{d_{\ell}}$ 
in each layer. 
Specifically, we must in this case pick $ \eps < 2R_1 $ and replace \eqref{normW} by 
\begin{equation*}
\max_{i= 1,\ldots,d_{\ell}} \sum_{j = 1}^{d_{\ell-1}} \abs{W_{ij}^{(\ell)}} \le \frac{R_{\ell}}{R'_{\ell-1}}.
\end{equation*}
\end{remark}
\subsection{Estimate on the generalization gap}
\label{sec:EstGenGap}
In order to estimate the generalization gap, we need a further 
\emph{technical} assumption on the neural networks and the underlying map i.e. 
we assume that, for any fixed architecture $ L,d_1, \ldots,d_L $ 
the input function $ \bar{g} $ satisfies,

\begin{equation} \label{ass:approxerr}
0 < c(\bar{g},(d_1,\ldots,d_L)) := \inf_{d \ge 1}\inf_{\theta} \calE_{G}(\theta)
\end{equation}
where the second infimum is taken with respect to all network parameters 
$ \theta $ that satisfy the hypothesis of Proposition \ref{prop:deepholo}.

We observe that this constant is related to the best DNN approximation with fixed architecture, 
see Remark \ref{remark:cg} for more details.
There holds the following theorem on the generalization gap.
\begin{thm}\label{thm:qmcgap}
Let $ \alpha \in \NN_{\ge 2} $. 
Assume that the activation function $ \sigma \colon \calS_{R}^{d_{\ell}} \to \calS_{R'}^{d_{\ell}}$ 
and the set of parameters $ \theta $ satisfy the hypotheses of Proposition \ref{prop:deepholo}. 
Let further $ \bar{g}\circ \xi:U \to \CC^{d_{L}}$ 
be $ (\bmbeta,p,\eps) $-holomorphic for some $ p < \frac{1}{\alpha}$ 
and 
with $0<\eps<2R$ 
and 
assume that \eqref{ass:approxerr} holds. 

Then there exists a constant $C>0$ that is independent of $d$ and $N$ such that
\begin{equation*}
\abs{\calE_{G}(\theta) - \calE_{T}^{(\alpha)}(\theta)} \le C N^{-\alpha} \;.
\end{equation*}
\end{thm}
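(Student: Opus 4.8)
The plan is to reduce the generalization-gap estimate to a dimension-robust higher-order quadrature error bound for the EPL rule applied to the squared-error integrand, and then to transfer that bound from the squared quantities $\calE_G^2,\ (\calE_T^{(\alpha)})^2$ to $\calE_G,\ \calE_T^{(\alpha)}$ themselves. The starting observation is that $\calE_G(\theta)^2 = \int_Y F(y)\dd y$ with $F := \abs{g - \phi_{\theta}^{L}}^2$, while by \eqref{eq:epllfalpha} and Definition \ref{def:EPL} the squared EPL training error satisfies $(\calE_T^{(\alpha)}(\theta))^2 = \abs{Q_{b^m,d}^{(\alpha)}(F)}$, i.e.\ it is exactly the extrapolated polynomial lattice quadrature of the same integrand $F$ (recall $b=2$, so $\tilde{\calE}_{T,m}(\theta)^2 = Q_{b^m,d}(F)$). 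Hence the difference of the squares is precisely the EPL quadrature error for $F$.

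First I would upgrade Proposition \ref{prop:generalizSPOD} from the shallow to the deep case. By Proposition \ref{prop:deepholo} the map $\phi_{\theta}^{L}\circ\xi$ is $(\bmbeta,p,\eps)$-holomorphic; since $\bar{g}\circ\xi$ is $(\bmbeta,p,\eps)$-holomorphic by hypothesis and affine maps are entire, $(\bar{g}-\phi_{\theta}^{L})\circ\xi$ is $(\bmbeta,p,\eps)$-holomorphic for the same $\bmbeta,\eps$. Applying Lemma \ref{lem:compholo} with the entire map $x\mapsto\abs{x}^2$ then shows that $F\circ\xi$ is $(\bmbeta,p,\eps)$-holomorphic, and the argument of Proposition \ref{prop:generalizSPOD} (through \cite[Theorem 3.1, Remark 4.3]{DLS16}) produces SPOD weights $\bmgamma$ of the form \eqref{def:weightsSPOD} together with a constant $C$, both independent of $d$, such that $\norm{F}{d,\alpha,\bmgamma,1,\infty}\le C$.

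Next I would invoke the dimension-independent, order-$\alpha$ convergence theorem for the EPL rule (\cite{DLS20_902}, resting on \cite{DGY19}): for integrands in $\calW_{d,\alpha,\bmgamma,1,\infty}$ with the SPOD weights above and with $\bmbeta\in\ell^p(\NN)$, $p<1/\alpha$, the CBC-constructed EPL rule obeys $\abs{\int_Y F\dd y - Q_{b^m,d}^{(\alpha)}(F)}\le C N^{-\alpha}\norm{F}{d,\alpha,\bmgamma,1,\infty}$ with a constant independent of $d$. Combined with the norm bound of the previous step, this yields $\abs{\calE_G^2 - (\calE_T^{(\alpha)})^2}\le C N^{-\alpha}$ (for $N$ large enough that $Q_{b^m,d}^{(\alpha)}(F)>0$, rendering the outer absolute value in $(\calE_T^{(\alpha)})^2$ inactive). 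Finally, factoring $\abs{\calE_G - \calE_T^{(\alpha)}} = \abs{\calE_G^2 - (\calE_T^{(\alpha)})^2}/(\calE_G + \calE_T^{(\alpha)})$ and using the lower bound $\calE_G(\theta)\ge c(\bar{g},(d_1,\dots,d_L))>0$ from \eqref{ass:approxerr} to keep the denominator bounded away from zero, I obtain $\abs{\calE_G - \calE_T^{(\alpha)}}\le C' N^{-\alpha}$, which is the claim.

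The main obstacle I anticipate is the third step: verifying that the very weights $\bmgamma$ forced by the holomorphy/Sobolev-norm analysis of the first step are exactly those for which the EPL rule attains dimension-independent order-$\alpha$ decay. Concretely, one must confirm that the SPOD structure \eqref{def:weightsSPOD}, paired with the summability $\bmbeta\in\ell^p(\NN)$, $p<1/\alpha$, keeps the EPL worst-case-error constant finite and uniform in $d$; it is this matching of the weight sequence produced by the complex-variable estimates to the hypotheses of the EPL quadrature theorem where the curse-of-dimensionality-free rate is actually earned. A secondary, more technical point is controlling the positivity of $Q_{b^m,d}^{(\alpha)}(F)$ (or otherwise taming the outer absolute value) so that the passage from squared errors to errors via \eqref{ass:approxerr} is fully rigorous.
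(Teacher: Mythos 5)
Your proposal is correct and follows essentially the same route as the paper's own proof: holomorphy of the squared mismatch via the arguments of Proposition \ref{prop:generalizSPOD} (Proposition \ref{prop:deepholo} plus Lemma \ref{lem:compholo}), membership in $\calW_{d,\alpha,\bmgamma,1,\infty}$ with SPOD weights, the dimension-independent order-$\alpha$ EPL quadrature bound from \cite{DLS20_902}, and finally the factorization $\abs{\calE_G - \calE_T^{(\alpha)}} = \abs{\calE_G^2 - (\calE_T^{(\alpha)})^2}/(\calE_G + \calE_T^{(\alpha)})$ with the denominator bounded below via \eqref{ass:approxerr}. Your flagged concerns (matching the SPOD weights to the EPL theorem's hypotheses, and the outer absolute value in $(\calE_T^{(\alpha)})^2$) are legitimate technical points that the paper handles by direct citation rather than explicit verification.
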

\begin{proof}
Applying the arguments in Proposition \ref{prop:generalizSPOD},
we obtain that 
$ (\bar{g} - \phi_{\theta}^{L})^2\circ \xi$ is $ (\bmbeta,p,\eps) $-holomorphic. 
Hence $  (g - \phi_{\theta}^{L})^2 $ belongs to the QMC weighted space 
$ \calW_{d,\alpha,\bmgamma, 1,\infty}$ and there holds the QMC approximation error convergence from \cite[Theorem 2.5]{DLS20_902} 
which gives
\begin{equation}
 \abs{\calE_{G}^2(\theta) - (\calE_{T}^{(\alpha)})^2(\theta)} \le C(\theta) N^{-\alpha} 
\end{equation}
for some constant $ C(\theta) > 0 $ that is independent of $ d $ and $ N $. 
Thus, by \eqref{ass:approxerr}, 
we obtain
\begin{align*}
\abs{\calE_{G}(\theta) - \calE_{T}^{(\alpha)}(\theta)} 
&= \frac{\abs{\calE_{G}^2(\theta) - (\calE_{T}^{(\alpha)})^2(\theta)}}{\calE_{G}(\theta) + \calE_{T}^{(\alpha)}(\theta)} 
\\
& \le \frac{1}{\inf_{\theta} \calE_{G}(\theta)} \abs{\calE_{G}^2(\theta) - (\calE_{T}^{(\alpha)})^2(\theta)}  
\le \frac{C(\theta)}{c(\bar{g},(d_1,\ldots,d_L))} N^{-\alpha}.
\end{align*}
Here, the constant $ c(\bar{g},(d_1,\ldots,d_L))$ is 
bounded away from zero as in \eqref{ass:approxerr}
uniformly with respect to $ N,d$, (but not with respect to $ (d_1,\ldots,d_L) $, cf. Remark \ref{remark:cg})
and the proof is complete. 
\end{proof}

\begin{remark} \label{remark:cg}
Condition \eqref{ass:approxerr} is not in contradiction with the universal approximation theorem 
(e.g. \cite{PinkusActa}) since we fix a priori the width of the inner layers of the networks. 
Note that \eqref{ass:approxerr} is true whenever $ g $ 
cannot be expressed as a DNN exactly a.e. in any parameter space $ [0,1]^d $ and, 
for fixed $ L,d_1,\ldots,d_L $, 
there holds that $ \inf_{\theta} \calE_{G}(\theta) $ is monotonically increasing with respect to $ d $. 
This is a reasonable assumption since high-dimensional input $ d $ reduces the approximation power of a DNN, 
when the rest of the architecture is fixed. 
In particular we expect that in most cases, 
for any parameter vector $ \theta $, the constant $ \frac{1}{\calE_{G}(\theta)} $ 
decreases when increasing $d$. 
On the other hand, 
in the case that \eqref{ass:approxerr} does not hold, 
for every $ \delta $ there exists $ \theta^{*} $ such that the approximation error 
$ \calE_{A} := \norm{g -  \phi_{\theta^{*}}^L}{L^{\infty}([0,1]^d)} < \delta $  
and hence there holds 
$\calE_{G}, \calE_{T}^{(\alpha)} < \delta $
for the parameter sequence $ \theta^{*} $.
\end{remark}

Theorem \ref{thm:qmcgap} implies the decay of $ \calE_{G} $ as
\begin{equation}\label{decayEG}
\calE_{G}(\theta) \le \frac{C(\theta)}{c(\bar{g},(d_1,\ldots,d_L))} N^{-\alpha} + \calE_{T}^{(\alpha)}(\theta),
\end{equation}
for all $ \theta $ satisfying the decay assumptions of Proposition \ref{prop:deepholo}. 
One natural question is what happens when the network size grows and $ c(\bar{g},(d_1,\ldots,d_L)) $ becomes smaller as a consequence. We note that in this case we have the direct, but coarse, estimate 
\begin{equation*}
\abs{\calE_{G}(\theta) - \calE_{T}^{(\alpha)}(\theta)} \le \sqrt{\abs{\calE_{G}^2(\theta) - (\calE_{T}^{(\alpha)})^2(\theta)}} \le C(\theta)^{1/2} N^{-\alpha/2}.
\end{equation*}
Moreover, we have that $ C(\theta) $ is proportional to (the square of) the constant $ C_{\eps} $ of Definition \ref{def:bpepsholo} (see \cite[Theorem 3.1 and Proposition 4.1]{DLS16}) and thus is independent on the architecture of $ \phi_{\theta} $ by Remark \ref{remark:archdep}. 
This implies a guaranteed order of $ N^{-\alpha/2} $ 
independent of the network architecture whenever Proposition \ref{prop:deepholo} applies. 
However, we observe in the numerical experiment that rate $ \alpha $ is attained in many situations.
%
\subsection{Impact of Discretization on DNN surrogate fidelity}
\label{sec:ImpDiscr}
The key result, Theorem \ref{thm:qmcgap}, provided an apriori 
bound on the generalization gap between the DNN
and the DtO map of interest, $g$. In practice, and in the 
numerical experiments reported in Section \ref{sec:4} ahead, 
however, the DNN training algorithms will, in general, not
be able to numerically access the exact map $g$.
Instead, 
a numerical approximation $g_\delta$ is available, 
which stems from some discretization scheme applied  
to a governing PDE, with 
discretization parameter $\delta \in (0,1]$. 
The DNN training process will numerically access $g_\delta$.
Completion of DNN training, therefore, 
will result in a DNN $\phi_{\theta^{*}}^{L} = \hat{g}_\delta$ instead of the DNN $\hat{g}$.
 With $|Y|\leq 1$,
\begin{equation} \label{errsplit}
\left (\int_Y |g(y)-\hat{g}_\delta(y)|^2 \dd y \right  )^{1/2} 
\le 
\sup_{y \in Y} |g(y) - g_\delta(y)| + \left (\int_Y  |g_\delta(y)-\hat{g}_\delta(y)|^2 \dd y \right  )^{1/2}
\end{equation}
Assuming $\{ g_\delta \}_{0<\delta\leq 1}$ to be convergent,
i.e. \eqref{eq:gdeltcnsist} holds, it remains to control the \emph{generalization error subject to discretized DtO maps},
$|g_\delta(y)-\hat{g}_\delta(y)|$. The proposed DNN training algorithm
requires ensuring applicability of Theorem \ref{thm:qmcgap} to 
the discretized DtO map $g_\delta$ rather than to the truth $g$.
To this end, we require \emph{uniform (w.r. to $\delta \in (0,1]$) parametric
$ (\bmbeta,p,\eps) $-holomorphy of $y\mapsto g_\delta(\xi(y))$ 
for some $ p < \frac{1}{\alpha}$}, 
i.e. where the conditions in \eqref{bpepseq} are met with $ \bmbeta,\eps,C_{\eps} $ independent of $ \delta $.
Generally, 
for DtO maps $\bar{g}\circ \xi$ which are $ (\bmbeta,p,\eps) $-holomorphic,
and for $g_\delta$ obtained by 
\emph{discretizations which are uniformly} (w.r.\@ to parametric inputs in 
a complex neighborhood of input data $Y$) \emph{stable}, 
uniform 
$ (\bmbeta,p,\eps) $-holomorphy of the family $\{g_\delta\}_{0<\delta\leq 1}$
follows from that of the DtO map $g$.
 As a consequence, this stability allows to bound the generalization error in the second term of \eqref{errsplit}, as we did in \eqref{decayEG} for the true map $ g $.
This uniform discretization stability required to apply Theorem \ref{thm:qmcgap} 
must be verified on a case-by-case basis.

For the benefit of the reader, we shall verify uniform 
$(\bmbeta,p,\eps)$-holomorphy of FEM discretizations of 
a model PDE example in Section \ref{sec:unifgalbpeps}.
\section{Numerical Experiments}
\label{sec:4}
\subsection{Implementation}
\label{sec:Implem}
The implementation and testing of the DL-HoQMC algorithm is performed using 
the machine learning framework PyTorch \cite{torch}. 
Both the training and test sets (to approximate the generalization error \eqref{eq:gerr} 
with QMC quadrature) are based on points generated with previously described 
extrapolated (and interlaced) lattice rules, where the size of the test set 
is chosen such that it significantly outnumbers the size of the training set, i.e. 
at least twice the amount of testing points than for the biggest training set. 
The training is performed using the ADAM optimizer \cite{adam} in a full-batch mode 
using a maximum of 20k epochs (learning steps). 
Hyperparameters, listed in \Tref{tab:ensemble_params} are selected via an ensemble training process, 
as described in \cite{LMR1}. 
The weights of the networks are initialized based on the so-called Xavier normal initializer \cite{xavier}, 
which is standard for training networks with sigmoid (or tanh) activation functions. 
\begin{table}[htbp]
  \caption{Hyperparameters for the ensemble training}
  \label{tab:ensemble_params}
  \centering
  \begin{tabular}{ll}
    \toprule
    \cmidrule(r){1-2}
    Hyperparameter     & values used for the ensemble \\
    \midrule
learning rate & $10^{-4}$\\
regularization parameter $\lambda$ &  $10^{-5},10^{-6},10^{-7} $\\
depth $L$ & $2^{2},2^{3},2^4$\\
width $d_j$ (constant) & $3\times2^1,3\times2^2,3\times2^3$\\
\# initializations & $2$  \\
    \bottomrule
  \end{tabular}
\end{table}
We train the networks for EPL points based on the upper bound \eqref{eq:epllf} and plot the training error \eqref{eq:epllf1}. For IPL points, however, we use the standard mean-square error \eqref{eq:ipllf} for training and plot the training error based on the $L^2$ norm \eqref{eq:plf}. Furthermore, we emphasize that all results are based on quantities (i.e. $\mathcal{E}_G$, $\mathcal{E}_T$ and so on) which are averaged over the trained ensemble of networks. This implies a certain stability towards the choice of the DNN architecture in order to obtain the desired rate of convergence.
If not stated differently, we base the subsequent experiments on EPL training points. 
Rates of convergence for all subsequent experiments are estimated using the exponential 
of the least squares fit of the logarithmized data, where the first order coefficient 
of the least squares fit defines the rate of decay. The scripts to perform the ensemble training together with all data sets used in the experiments can be downloaded from \href{https://github.com/tk-rusch/DL-HoQMC}{\textit{https://github.com/tk-rusch/DL-HoQMC}}.

\subsection{Function approximation}
\label{sec:FncAppr}
For our first numerical experiment, we approximate the following function:
\begin{equation}
\label{eq:weighted_sum}
g(y) = \frac{1}{1 + 0.5 \sum_{j=1}^d y_j j^{-2.5}},
\end{equation} 
with the DL-HoQMC algorithm.  Here, there is no additional discretization error as mentioned in Section \ref{sec:ImpDiscr}.
In \Fref{fig:func_approx_50d}, we present the generalization error of the trained neural networks for different number of EPL training points together with the training error for $d=50$, i.e. a $50$-dimensional parameter space. 
We can see that while the training error is very low and does not seem to decay 
with increasing number of training points, 
the generalization error decays with a rate of around $2.1$ in terms of $\#(\train)$
and thus the generalization gap should decay at the same rate. 
This is indeed verified in \Fref{fig:fun_appr_50d_gap}, 
where we observe a decay rate of approximately $2.3$ for the generalization gap, 
which agrees with our theoretical predictions. 
Furthermore, we also train DNNs based on the interlaced lattice rule (IPL). 
\Fref{fig:fun_appr_50d_gap} also shows the generalization gap using IPL points.
We can see that the generalization gap based on the IPL rule 
has approximately the same convergence rate as using the EPL rule, 
i.e. a convergence rate of around $2.3$. Note that in this experiment a bigger learning rate is used, i.e. a learning rate of $10^{-3}$.

\begin{figure}[h!]
\centering
\begin{minipage}[t]{.49\textwidth}
\includegraphics[width=1.\textwidth]{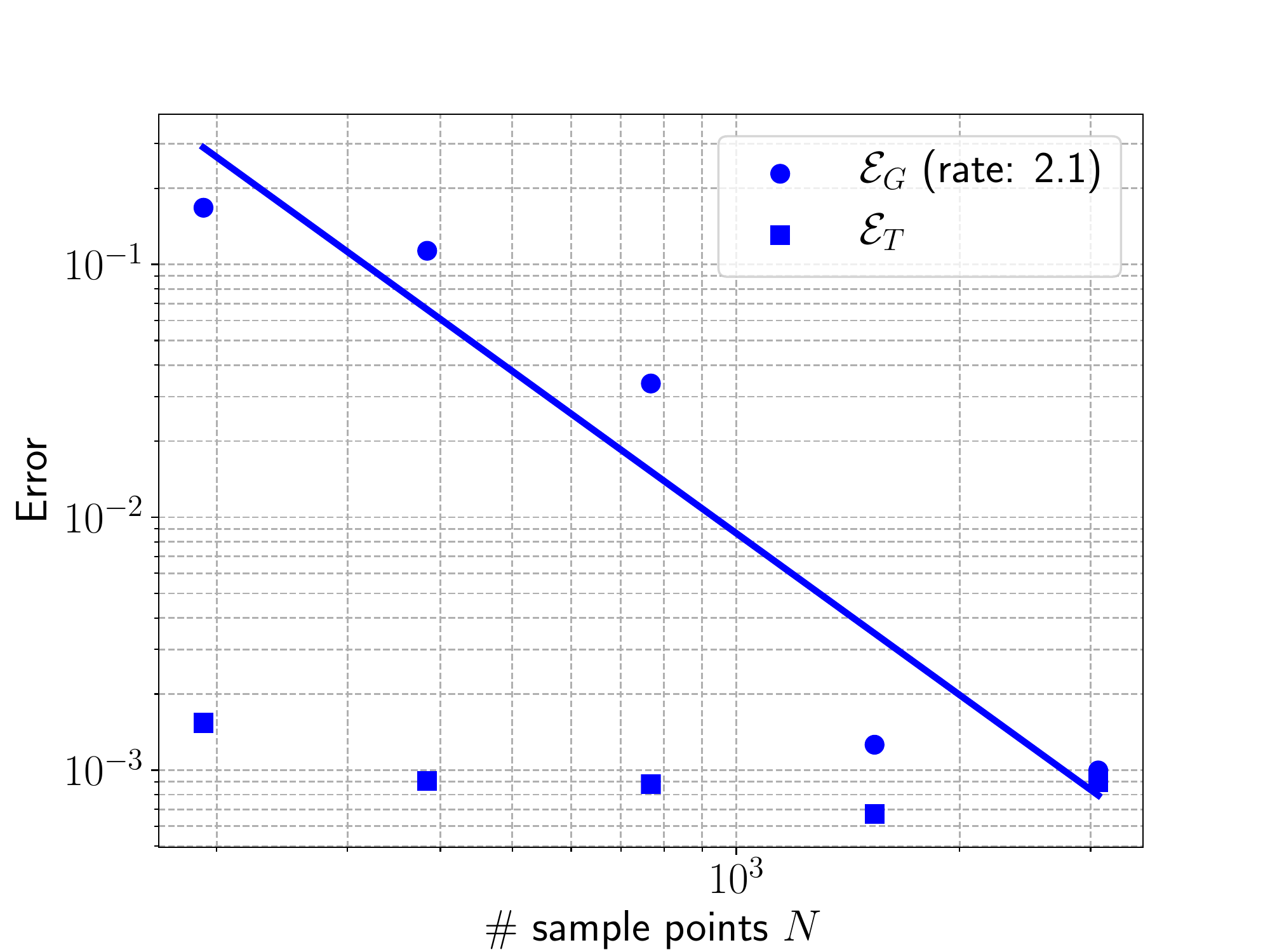}
\caption{Generalization error $\mathcal{E}_G$ for approximating the function $g$ \eqref{eq:weighted_sum} in $d=50$ dimensions together with the training error $\mathcal{E}_T$.}
\label{fig:func_approx_50d}
\end{minipage}%
\hspace{0.01\textwidth}
\begin{minipage}[t]{0.49\textwidth}	
\includegraphics[width=1.\textwidth]{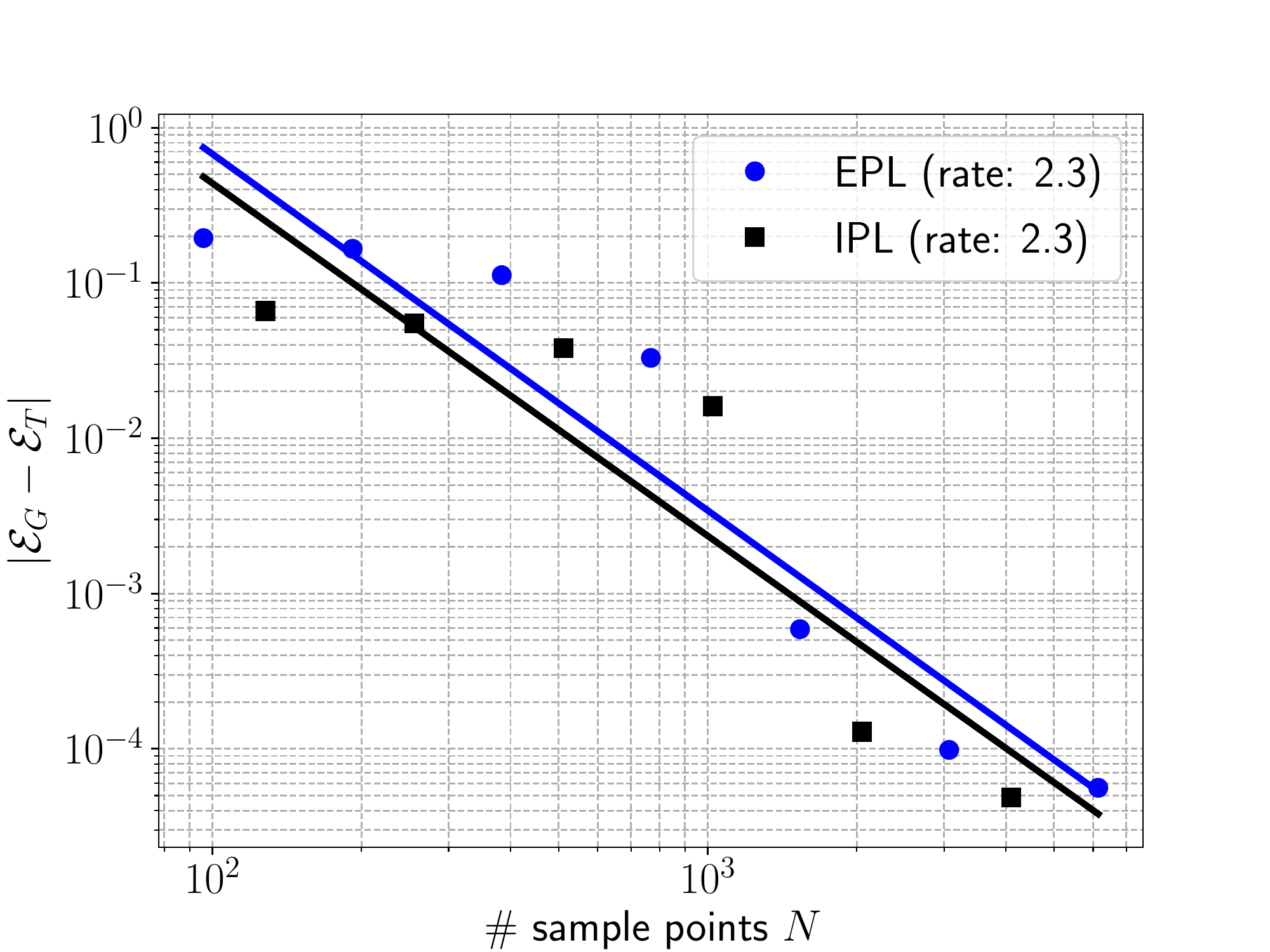}
\caption{
Generalization gap for approximating the function $g$ \eqref{eq:weighted_sum} 
with parameter dimension $d=50$ using both methods, 
the extrapolated polynomial lattice rule (EPL) as well as the interlaced polynomial lattice rule (IPL).}
\label{fig:fun_appr_50d_gap}
\end{minipage}
\end{figure}

We also observe that the IPL and EPL based designs $\train$ are considerably more economic
than deterministic, tensor product constructions: two points per coordinate imply then
$\#(\train) = 2^d$, i.e. $2^{50} \simeq 10^{16}$ points. 
This is to be compared to $10^3 - 10^4$ points in the lattices used in our numerical examples.
Using $\train$ based on i.i.d random draws, a reduction of the (mean square) generalization gap by
a factor of $10^4$ as displayed in \Fref{fig:fun_appr_50d_gap} would mandate a $10^{8}$ fold
increase of $\#(\train)$, i.e. about $\#(\train) \sim 10^{10}$ which is likewise prohibitive.

\subsection{Elliptic parametric PDEs}
\label{subsec:elliptic_exp}
For the second numerical experiment, 
we consider a test case that arises as a prototype for uncertainty quantification (UQ) 
in elliptic PDEs with uncertain coefficient \cite{CCS15,CDS10,DLS20_902}.
On the bounded physical domain $ D = (0,1)^2 $ and with parameters
$ y \in \left [ - \frac{1}{2}, \frac{1}{2} \right ]^{d}$, 
we consider the following elliptic equation with homogeneous Dirichlet boundary conditions
\begin{equation}\label{ellPDE}
	\begin{cases}
	-\div \left (a(x,y) \nabla u(x,y) \right ) = f(x) & x \in D 
        \\
	u (x,y)= 0 & x \in \partial D
	\end{cases}
	.
\end{equation}
We choose a deterministic source $ f(x) = 10 x_1 $, and the following observable 
\begin{equation*}
g(y) := \frac{1}{|\tilde{D}|}\int_{\tilde{D}} u(\cdot,y),
\end{equation*}
for $ \tilde{D} = (0,\frac{1}{2})^2 $. We denote the approximated observable, i.e.\ the observable based on a FEM approximation $u_h$ of $u$, as $g_h$.

We assume that the diffusion coefficient $ a(x,y) $ is affine-parametric, that is
\begin{equation} \label{affinePar}
	a(x,y) = \bar{a}(x) + \sum_{j = 1}^{d} y_j \psi_j(x).
\end{equation}
Here we choose $ \bar{a} \equiv 1 $, $ \psi_{j}(x) := \psi_{(k_1,k_2)}(x) = \frac{1}{(k_1^2 + k_2^2)^{\eta}}\sin(k_1\pi x_1)\sin(k_2\pi x_2)$ 
and the ordering is defined by 
$(k_1,k_2) < (\bar{k}_1,\bar{k}_2) $ when $ k_1^2 + k_2^2 < \bar{k}_1^2 + \bar{k}_2^2$ 
and is arbitrary when equality holds.  In particular this ensures well-posedness of the problem and the asymptotic decay 
\begin{equation} \label{betajell}
 	\beta_{j} := \frac{\norm{\psi_{j}}{L^{\infty}(D)}}{2 \operatorname{essinf}_{x \in D}\bar{a}(x)}  \sim  j^{-\eta}  
\end{equation} 

which in turn implies that $ u $, (hence $ g $) is $ (\bmbeta,p,\eps) $-holomorphic for any $ p > \frac{1}{\eta} $, 
with the arguments from \cite{CCS15}. 
\subsubsection{\texorpdfstring{Uniform $(\bmbeta,p,\eps)$-holomorphy}{}} 
\label{sec:unifgalbpeps}
We start by sketching the basic arguments to show 
uniform $ (\bmbeta,p,\eps) $-holomorphy in the sense of Section \ref{sec:ImpDiscr}, 
for the case of linear, second order elliptic PDEs in the divergence form \eqref{ellPDE} \eqref{affinePar}. 
First we rewrite the weak formulation of \eqref{ellPDE} for $ z \in \CC^{\NN} $: 
denoting $ V := H_0^1(D;\CC) $ the complexified Banach space of $ H_0^1 $, 
we seek 
\begin{equation} \label{weakellpde}
u\in V \;\;\mbox{such that}\;\;
\frka_{z} (u,v) =\bracket{ f, v} \quad \forall v \in V 
\;.
\end{equation} 
Here the complex extension of the parametric bilinear form is the 
sesquilinear form $ \frka_{z} \colon V \times V \to \CC $ defined by
\begin{equation*}
\frka_{z} (w,v) := \int_D a(\cdot,z) \nabla w \cdot \overline{\nabla v}
\end{equation*}
and the brackets denote the pairing of $ V $ with its (topological) dual $ V^* $ over the field $ \CC $. 
Now, let $ \bmrho $ be $ (\bmbeta, \eps) $-admissible for the 
same (real) $ \bmbeta $ as in \eqref{betajell} and $ 0<\eps < (1 - \norm{\bmbeta}{\ell^1(\NN)})/2 $. 
Then there holds  uniform coercivity of $ \frka_{z} $ on $ V $ as  $ \forall z\in \calT_{\bmrho} $,
\begin{equation}\label{essinfa}
\abs{a(x,z)} \ge \Re ( a(x,z) ) 
\ge 
\bar{a}(x)\left (1 - \sum_{j \ge 1} \Re (z_j)  \beta_{j} \right ) 
\ge 
(1 - \eps - \norm{\bmbeta}{\ell^1(\NN)}) \operatorname{essinf}\bar{a} > 0.
\end{equation}
 Uniform (on $\calT_{\bmrho}$)
continuity is readily verified, 
so that the complexified variational form of the PDE \eqref{ellPDE}
is well-defined by the (complex version of the) 
Lax-Milgram lemma. 
Furthermore,
\eqref{essinfa} directly implies also uniform coercivity for any conforming 
FE discretization space $ V_{h} \subset V$, 
including in particular the first order, 
continuous Lagrangian FEM with mesh width $ 0 < h \le 1 $,
used in our numerical experiments. 
Therefore, the discrete problem obtained replacing 
$ V $ with $ V_{h} $ in \eqref{weakellpde} admits a 
unique solution $ u_{h} \in V_{h} $ and 
there holds the uniform (with respect to $z\in \calT_{\tilde{\bmrho}}$ and $ h $) 
bound  
\begin{equation*}
\sup_{0<h\le 1}\sup_{z \in \calT_{\tilde{\bmrho}}} \abs{g_{h}(z)} 
\lesssim 
\sup_{0<h\le 1}\sup_{z \in \calT_{\tilde{\bmrho}}} 
\norm{u_{h}(\cdot,z) }{V} 
\le \frac{\norm{f}{V^*}}{ (1 - 2\eps - \norm{\bmbeta}{\ell^1(\NN)} ) \operatorname{essinf}\bar{a} } 
=:C_{\eps}
\end{equation*}
for some $ \tilde{\bmrho} > \bmrho $.

\subsubsection{Results with EPL training points}
Given the preceding section, it is clear that we can use a FEM simulation in order to provide training data for the deep neural networks approximating the underlying observable. For the first example, we set $ \alpha = 2, \eta = 2.5,$ and we run first order Lagrangian finite element (FEM) simulations with
$131072$ triangular elements for each QMC sample point, in order to generate the training data.

\par
The resulting generalization gap (averaged over the ensemble) for approximating the observable $g_h$ on parameter domains of dimension $16$ and $32$,
with EPL training points, can be seen in \Fref{fig:elliptic_pde_dim_comp}. 
We observe that for both choices of the parameter dimension $d$,
the generalization gap not only has the same convergence rate of approximately $2.1$, 
but also has almost the same absolute error. 
This is again in accordance with our theoretical findings, 
where we claimed the generalization gap to be independent of the dimension of the underlying problem. Note that we slightly increased the depth of the networks for the $32$ dimensional case in order to have similar training error, i.e. instead of using networks with depths of $4,8$ and $16$, we use depths of $6,10$ and $18$ for the ensemble training procedure.
\begin{figure}[h!]
\centering
\begin{minipage}[t]{0.49\textwidth}	
\includegraphics[width=1.\textwidth]{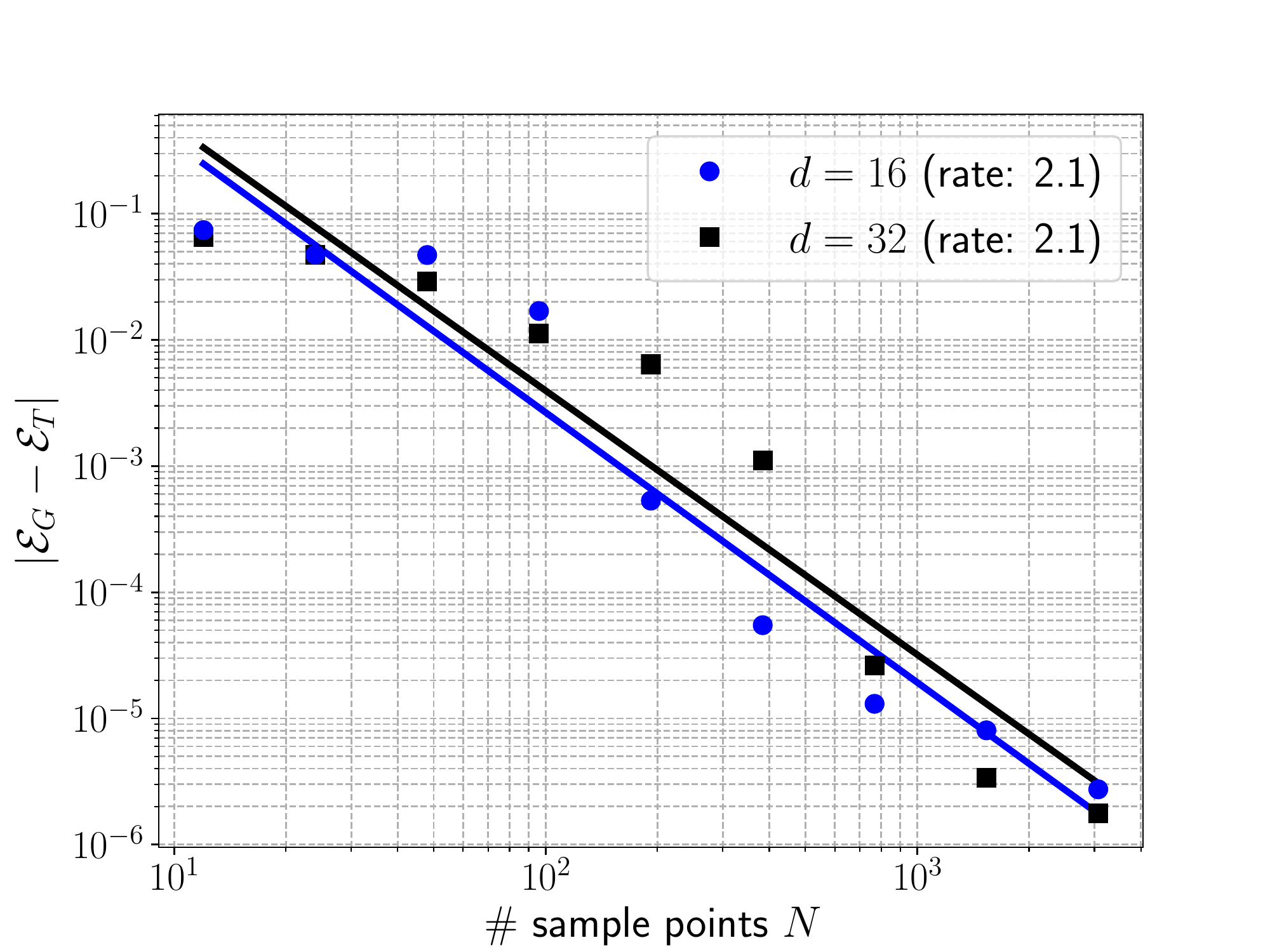}
\caption{Generalization gap $|\mathcal{E}_G - \mathcal{E}_T|$ for approximating the observable $g_h$ corresponding to the elliptic PDE \eqref{ellPDE} in 16 dimensions as well as in 32 dimensions.}
\label{fig:elliptic_pde_dim_comp}
\end{minipage}%
\hspace{0.01\textwidth}
\begin{minipage}[t]{.49\textwidth}
\includegraphics[width=1.\textwidth]{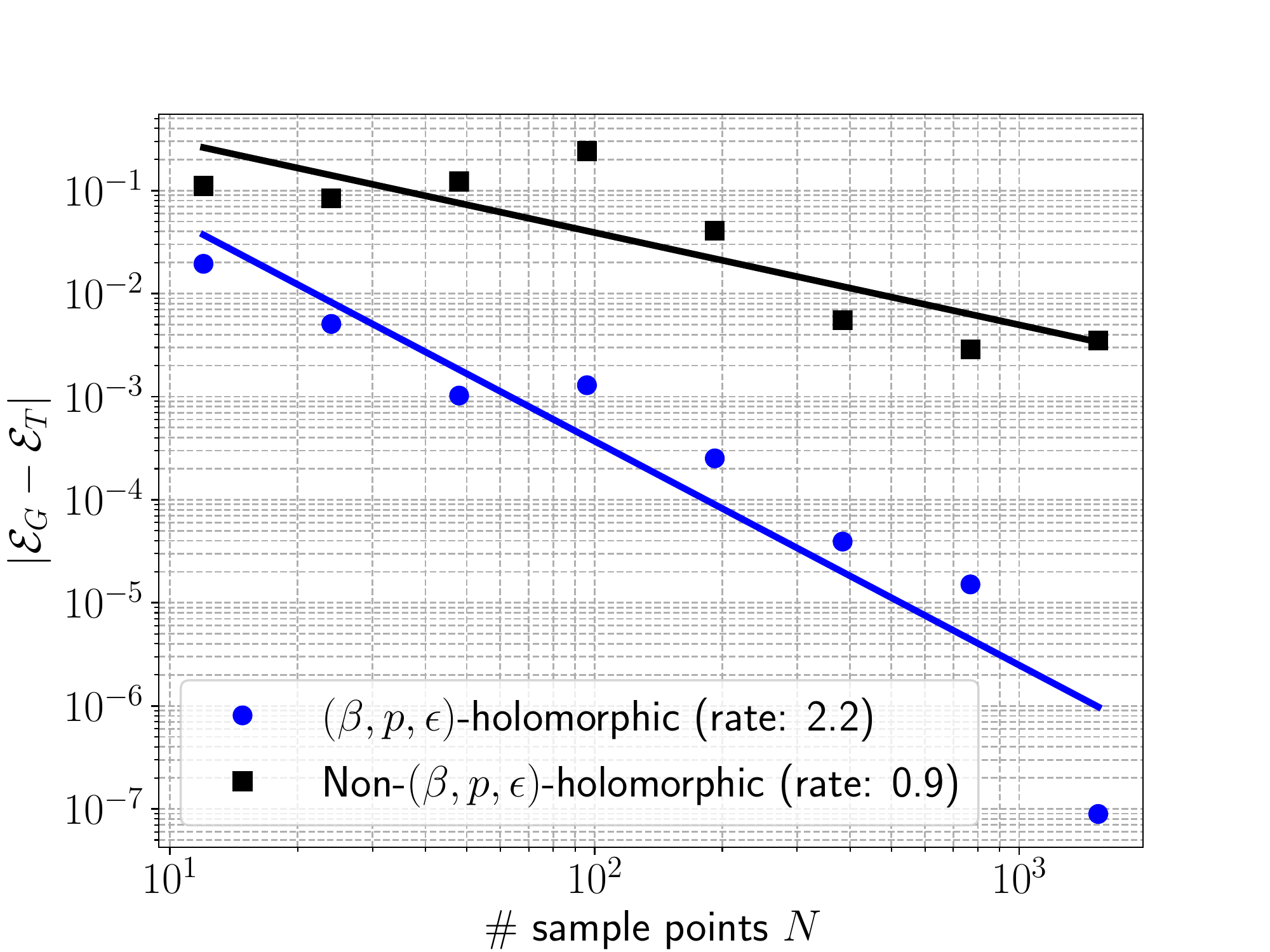}
\caption{Generalization gap $|\mathcal{E}_G - \mathcal{E}_T|$ using untrained DNNs based on the observable $g_h$ corresponding to the elliptic PDE \eqref{ellPDE} in 16 dimensions for both $(\bmbeta,p,\eps) $-holomorphic as well as non-$(\bmbeta,p,\eps) $-holomorphic networks.}
\label{fig:assumptions_check}
\end{minipage}
\end{figure}
\subsubsection{Holomorphy assumptions on the neural networks}
\label{sec:HolNN}
We examine the assumptions in Proposition \ref{prop:deepholo} which guarantee that a DNN is $(\bmbeta,p,\eps) $-holomorphic on polytubes in the following numerical experiment. As Proposition  \ref{prop:deepholo} does not rely on trained neural networks, we will focus on the case where the neural networks are untrained, i.e. the weights and biases are selected a priori and the generalization error \eqref{eq:gerr} is evaluated on an increasing set of \emph{test points}.  To this end, we generate a neural network with fixed width and depth and randomly generated weights. Note that the weights are chosen such that the neural network does not satisfy the assumptions of Proposition \ref{prop:deepholo}. We also construct a second DNN by clamping the weights of the first network into certain ranges such that the resulting network satisfies all assumptions of Proposition \ref{prop:deepholo} and thus is $ (\bmbeta,p,\eps) $-holomorphic on polytubes. \Fref{fig:assumptions_check} shows the generalization gap $|\mathcal{E}_G - \mathcal{E}_T|$ using the non-clamped (non-$ (\bmbeta,p,\eps) $-holomorphic) neural network as well as the clamped network ($ (\bmbeta,p,\eps) $-holomorphic). We see that in the case of the clamped neural network, a second order convergence is obtained, while the non-clamped network has a convergence rate of a bit less than 1. This supports the sufficiency of the assumptions of Proposition \ref{prop:deepholo} also numerically.

Despite this observation, we will not constrain the weights during training. Apart from the technical difficulty of doing so, we discover that most of the trained networks verified the assumption of  Proposition \ref{prop:deepholo} \emph{a posteriori}, possibly on account of the role played by the regularization term in the loss function \eqref{eq:lf}. 

\subsubsection{On the choice of activation functions}
\label{sec:ActFnct}
As the holomorphy of the DNN follows from holomorphy of the underlying activation function in \eqref{def:dnn}, 
we expect that the ReLU activation, which is not holomorphic at the origin, will lead to 
DNNs with worse convergence rates of the generalization gap 
than the DNNs with a holomorphic activation function such as the hyperbolic tangent. 
To investigate this, 
in \Fref{fig:relu_vs_tanh_elliptic}, we present the numerical generalization error and 
the numerically estimated training error for approximating the observable $g_h$ 
corresponding to the elliptic PDE \eqref{ellPDE} in  $16$ dimensions using tanh as well as ReLU as activations 
of the neural networks. 
We observe that using ReLU activations results in a slightly lower absolute generalization error than using tanh for small number of training samples. However, the rate of convergence using tanh activation appears to be close to $2.2$, while the convergence rate for using ReLU activation is close to $0.9$ and thus roughly one order lower than using tanh.  We conclude that for this example, the absolute generalization error is lower for tanh than for ReLU when the number of training samples is increased. \Fref{fig:relu_vs_tanh_elliptic_gap} shows the generalization gap for the same experiment.
We observe the same behavior as in \Fref{fig:relu_vs_tanh_elliptic}. 
This is to some extent expected, as the training error in \Fref{fig:relu_vs_tanh_elliptic} is almost constant for different numbers of training points for both tanh activation as well as for ReLU activation.
\begin{figure}[h!]
\centering
\begin{minipage}[t]{0.49\textwidth}	
\includegraphics[width=1.\textwidth]{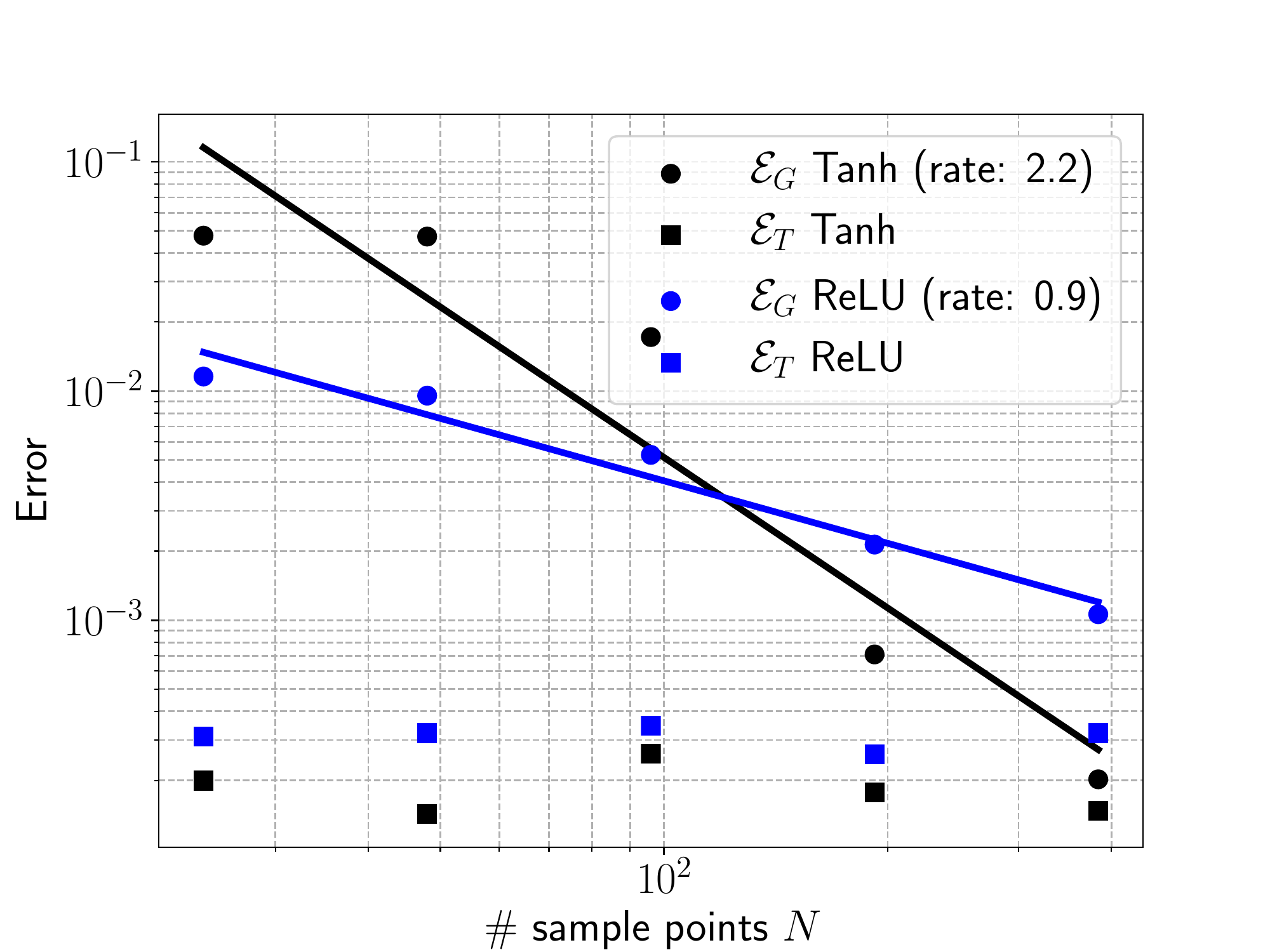}
\caption{Generalization error $\mathcal{E}_G$ together with the training error $\mathcal{E}_T$ for approximating the observable $g_h$ corresponding to the elliptic PDE \eqref{ellPDE} in 16 dimensions using tanh as well as ReLU as activations of the neural networks.}
\label{fig:relu_vs_tanh_elliptic}
\end{minipage}%
\hspace{0.01\textwidth}
\begin{minipage}[t]{.49\textwidth}
\includegraphics[width=1.\textwidth]{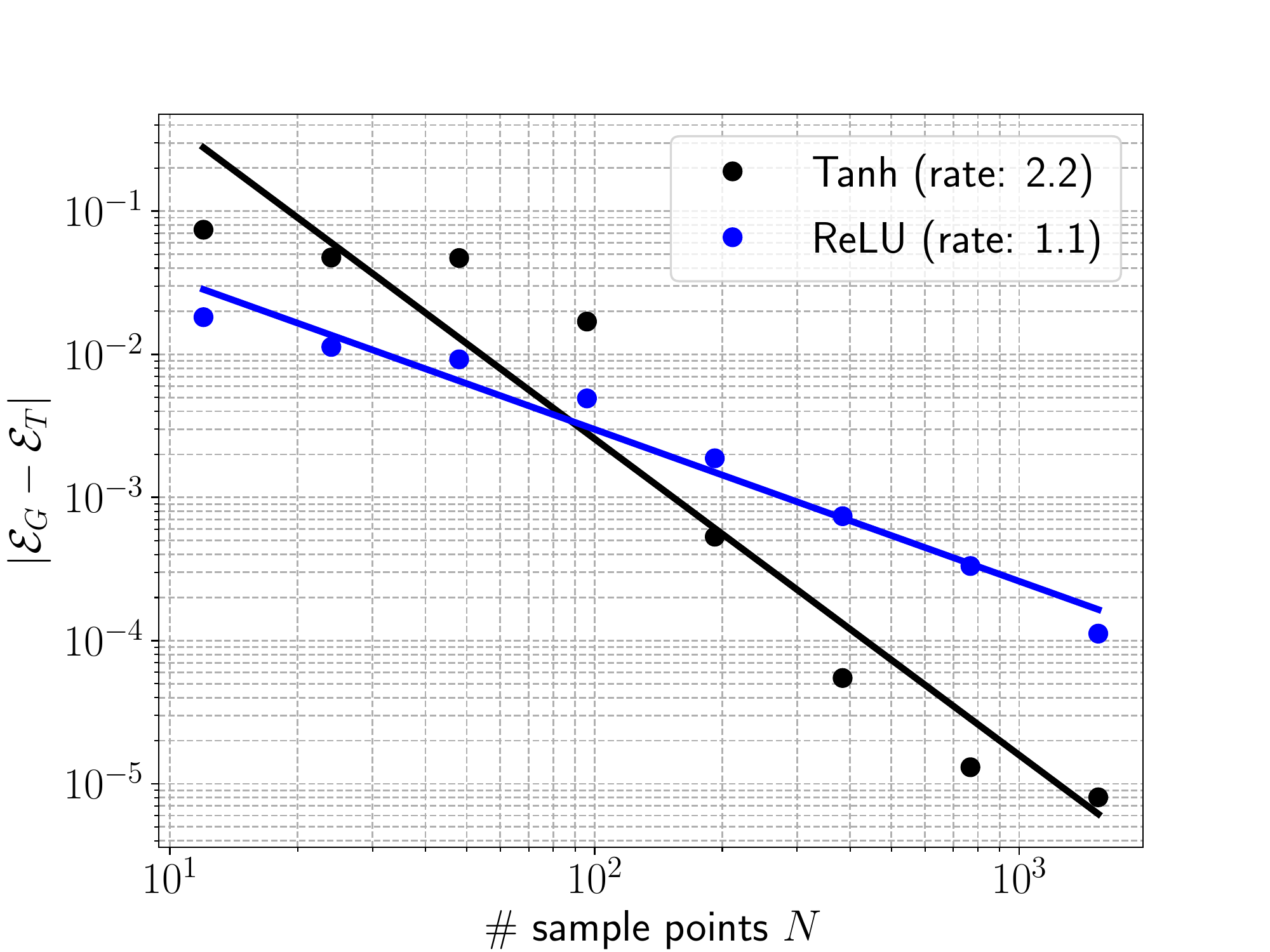}
\caption{
Generalization gap $|\mathcal{E}_G - \mathcal{E}_T|$ for approximating the 
observable $g_h$ corresponding to the elliptic PDE \eqref{ellPDE} 
 on a parameter domain of $16$ 
dimensions using tanh as well as ReLU as activations of the neural networks.}
\label{fig:relu_vs_tanh_elliptic_gap}
\end{minipage}
\end{figure}

\subsubsection{On the discretization of the training data}
\label{sec:disc}
As already mentioned in a previous section, in practice the training data is not provided by exact values of the DtO map but rather based on some approximations, for instance based on FEM approximations of the PDE. In the following, we conduct an experiment to analyze the effect of the approximation of the training data on the generalization of the DNNs numerically. To this end, we consider the observable  $g_h$ corresponding to the elliptic PDE \eqref{ellPDE} in 16 dimensions. We compute the training points of the observable based on three different FEM meshes, i.e. $g_{h_1}$ with $32768$ elements, $g_{h_2}$ with $131072$ elements and $g_{h_3}$ with $524288$ elements. Note that we use first order Lagrange elements in all cases and that the other experiments are all based on $g_{h_2}$.
\Fref{fig:disc_test} shows the generalization gap $|\mathcal{E}_G - \mathcal{E}_T|$ based on all three data sets corresponding to different meshes. The generalization error $\mathcal{E}_G$, 
for each set of training data, based on a different number of elements, 
is calculated using test points corresponding to different ground truths, i.e. $g_{h_i}$, for $i=1,2,3$. 
We can see that in all cases the convergence rate is almost the same and in particular of second order. 
We can conclude that as long as the FEM discretization is stable, 
the impact of the approximation error in the training data on the DL-HoQMC algorithm is negligible.

\begin{figure}[h!]
\centering
\begin{minipage}[t]{0.49\textwidth}	
\includegraphics[width=1.\textwidth]{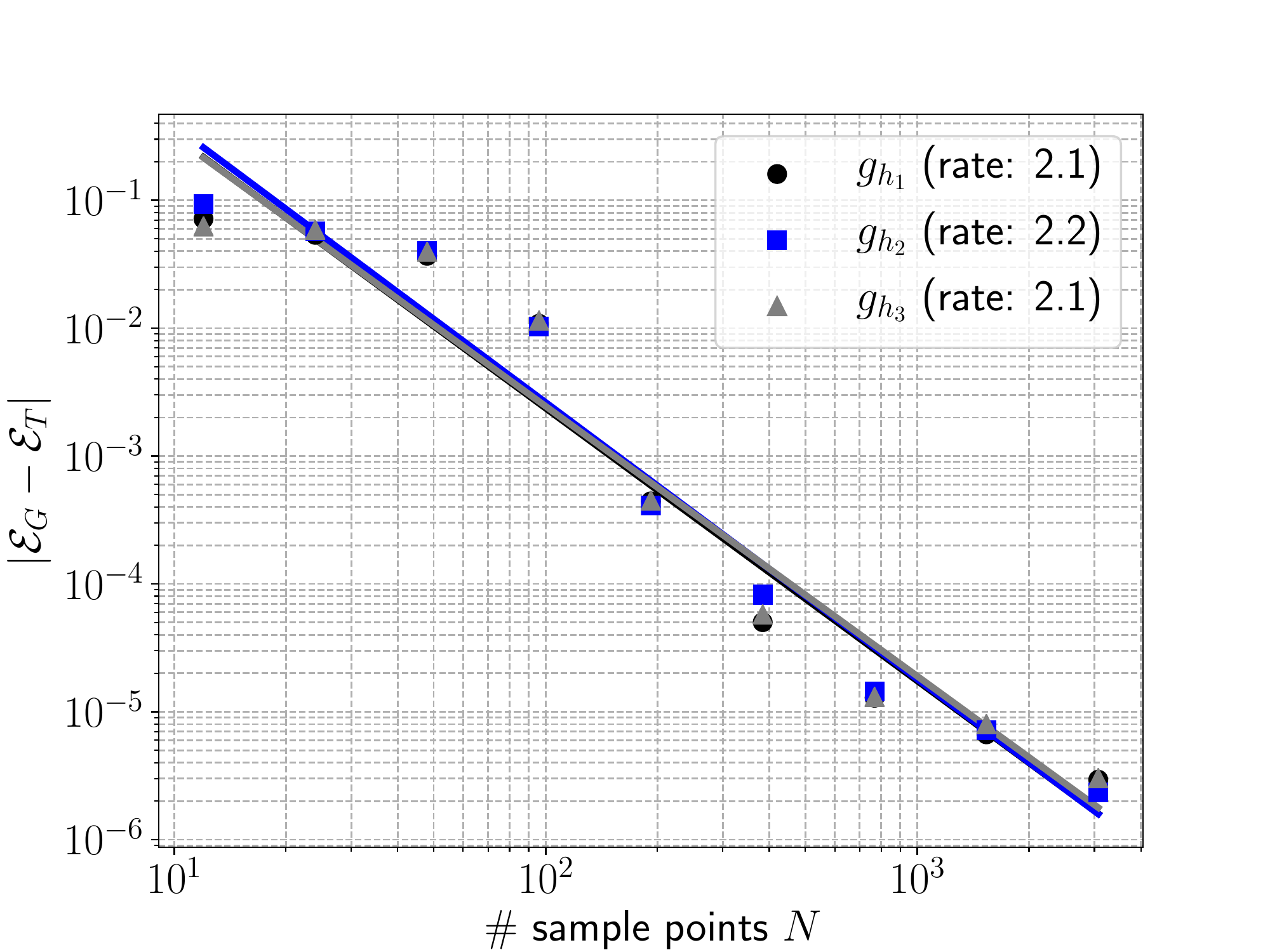}
\caption{Generalization gap $|\mathcal{E}_G-\mathcal{E}_T|$ for approximating observables $g_{h_i}$ corresponding to the elliptic PDE \eqref{ellPDE} in $16$ dimensions and based on three different FEM meshes.}
\label{fig:disc_test}
\end{minipage}%
\hspace{0.01\textwidth}
\begin{minipage}[t]{.49\textwidth}
\includegraphics[width=1.\textwidth]{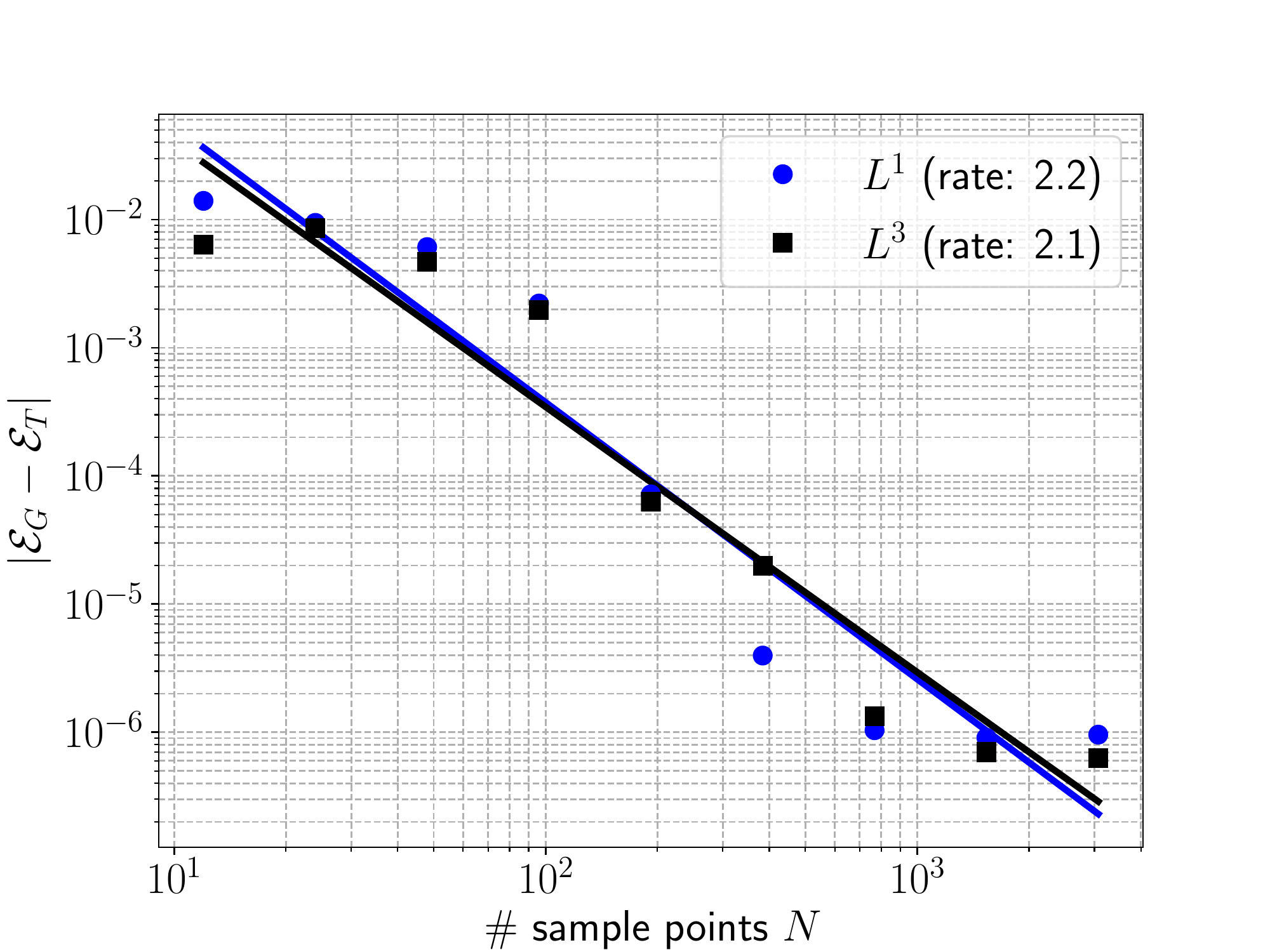}
\caption{
Generalization gap $|\mathcal{E}_G-\mathcal{E}_T|$ for approximating observable $g_{h}$ corresponding to the elliptic PDE \eqref{ellPDE} in $16$ dimensions using different $L^q$ norms in the cost function, i.e. $L^1$ and $L^3$.}
\label{fig:Lp_conditions}
\end{minipage}
\end{figure}
\subsubsection{On the holomorphy of the cost function}
\label{sec:HolomCstFn}
So far, we exclusively considered the case where $\calE_G$ and $\calE_T$ 
are based on the $L^2$ norm (or any $L^{2\mathbb{N}}$ norm) in order 
to ensure holomorphy of the integrand in \eqref{eq:gerr}. 
Hence, one question that naturally arises is if this condition 
is not only sufficient but also necessary. 
To test this numerically, 
we change the definition of $\mathcal{E}_G$  and $\mathcal{E}_T$ by using $L^q$ norms, 
where $q$ is an odd natural number. 
\Fref{fig:Lp_conditions} shows the computed generalization gap $|\mathcal{E}_G - \mathcal{E}_T|$ 
based on two different $L^q$ norms, i.e. $L^1$ and $L^3$.
We can see that a convergence of roughly second order is still obtained in both cases. 
Thus, this experiment suggests that the holomorphy condition 
on the cost function is sufficient but, apparently, 
not necessary in order to obtain second order convergence using the DL-HoQMC algorithm.
\subsection{ Linear Parabolic PDEs}
\label{sec:ParPDE}
For the next experiment, we consider a time-dependent problem 
defined by the following parametric parabolic PDE, 
which arises in the context of UQ for parabolic PDEs with uncertain initial data. 
For all $  y \in \left [ - \frac{1}{2}, \frac{1}{2} \right ]^{d} $, 
\begin{equation}\label{parPDE}
	\begin{cases}
	\partial_t u(x,t,y)-\div \left (\nabla u(x,t,y) \right ) = f(x,t) & x \in D, t \in [0,T] 
        \\
	u(x,t,y)= 0 & x \in \partial D, t \in [0,T]  \\
	u(x,0,y) = u_0(x,y) & x \in D
	\end{cases}
	.
\end{equation}
Here the divergence $ \div $ and the gradient $ \nabla $ are understood only with respect to the variables $ x $.
We use $ D = (0,1)^2 $ and the uncertain initial data $u_0$ is parametrized by
\begin{equation*}
u_0(x,y) = \exp\left (100\sum_{j=1}^dy_j\psi_j(x)\right ) - 1,
\end{equation*} 
where we use the same $\sin$ expansion for $\psi_j$ as in the elliptic case in the previous section. 
As for the source term $ f $, we select a moving localized source as
\begin{equation*}
f(x,t) = 100\exp(-20(x_1 - t)^2 - 20(x_2 - t)^2)
\end{equation*}
and for an observable we choose $ g\colon y\mapsto \dfrac{1}{|\tilde{D}|}{\displaystyle \int_{\tilde{D}} } u(\cdot,T,y)$. 
Again, we select $ \alpha = 2, \eta = 2.5, d = 16 $; 
then we run FEM simulations with $131072$ elements for each QMC sample point, 
here corresponding to EPL points, and final time $ T = 0.5 $. 
The time integration is by a backward Euler method, 
with constant time-step $ \Delta t = 10^{-3} $. We denote this approximation again as $g_h$.
\begin{figure}[h!]
\centering
\begin{minipage}[t]{.49\textwidth}
\includegraphics[width=1.\textwidth]{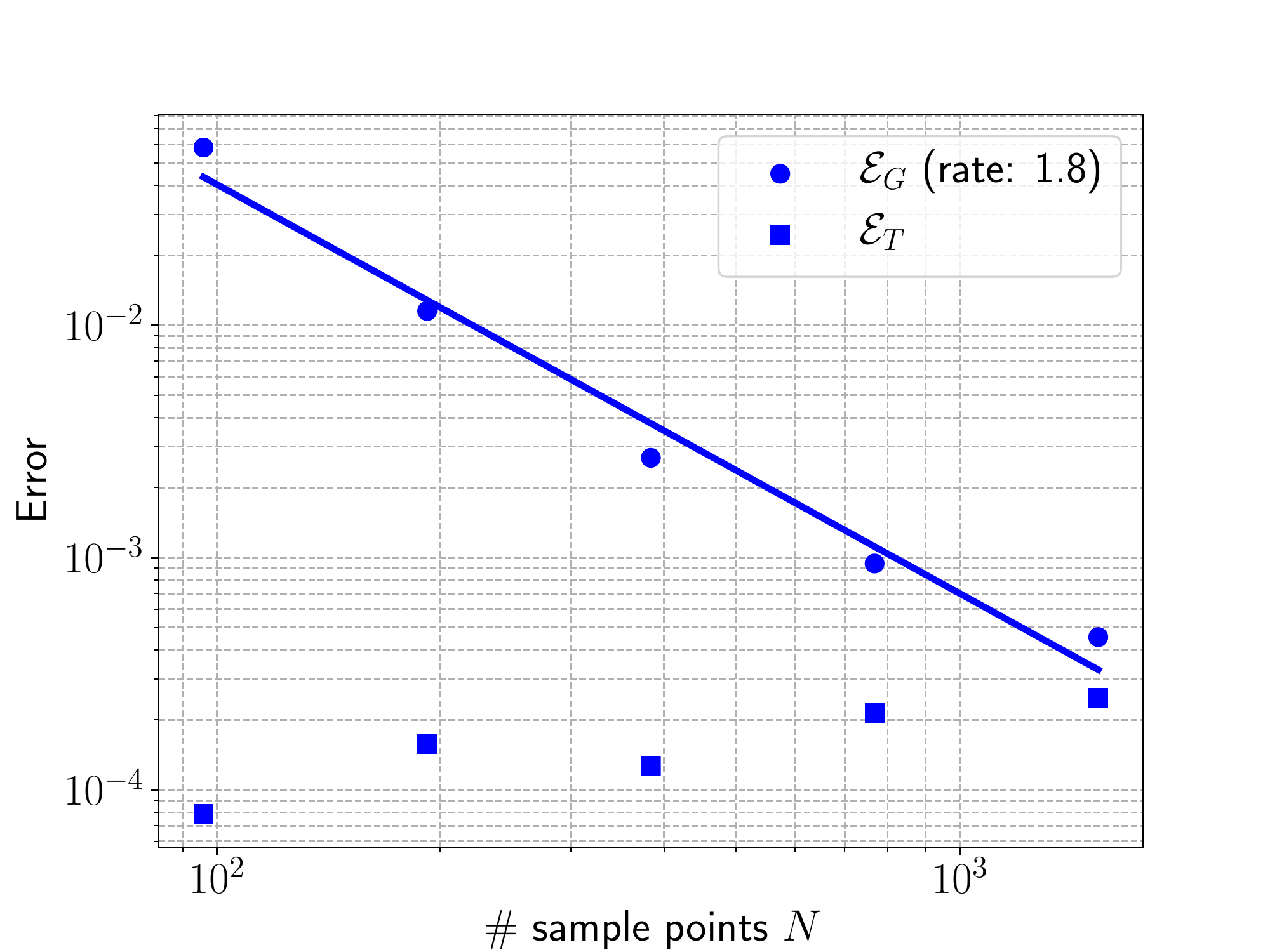}
\caption{Generalization error $\mathcal{E}_G$ for approximating the observable $g_h$ corresponding to the linear parabolic PDE \eqref{parPDE} in 16 dimensions together with the training error $\mathcal{E}_T$.}
\label{fig:parabolic_uq_init_data_gen}
\end{minipage}%
\hspace{0.01\textwidth}
\begin{minipage}[t]{0.49\textwidth}	
\includegraphics[width=1.\textwidth]{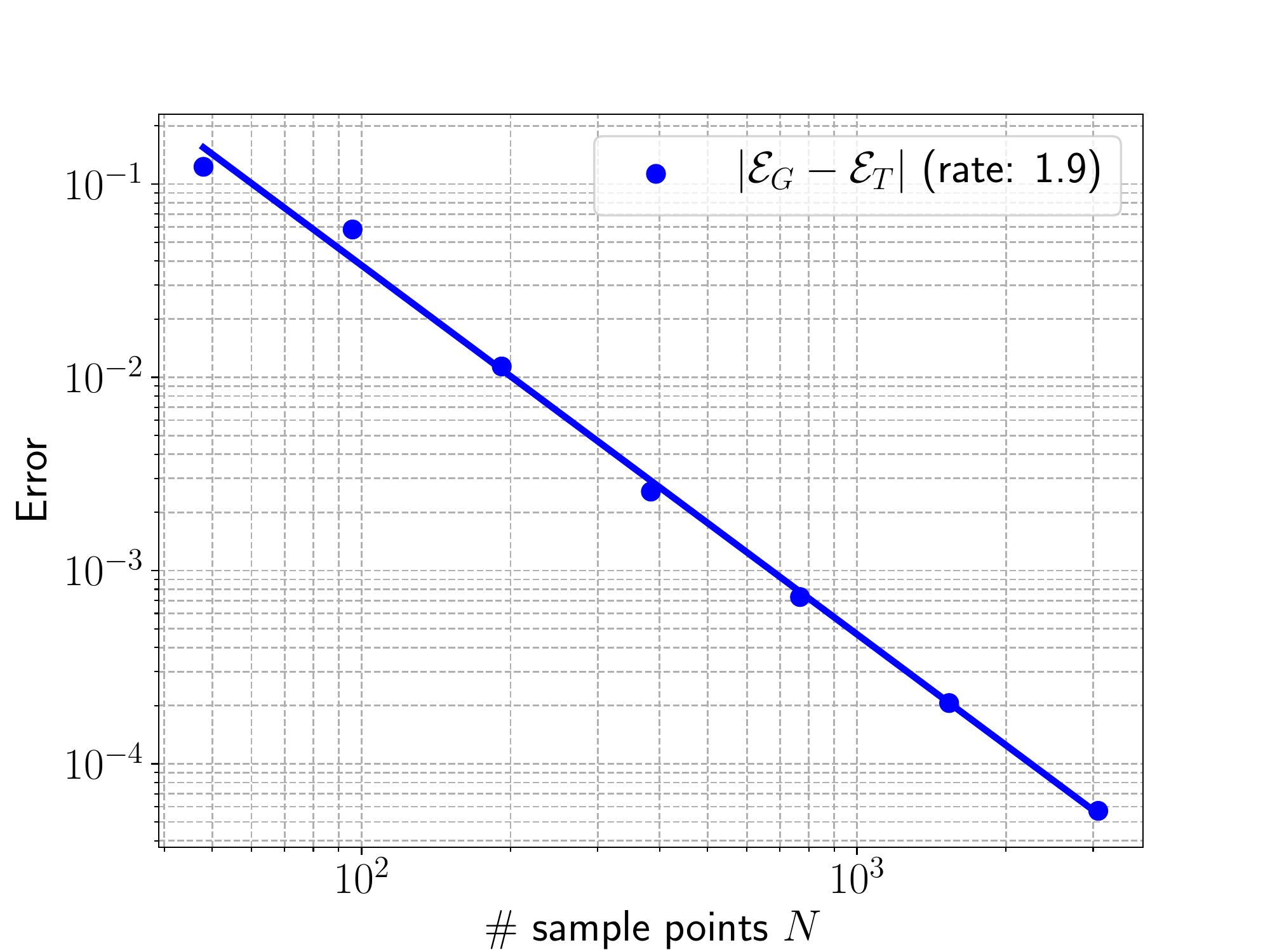}
\caption{Generalization gap $|\mathcal{E}_G - \mathcal{E}_T|$ for approximating the observable $g_h$ corresponding to the linear parabolic PDE \eqref{parPDE} in 16 dimensions.}
\label{fig:parabolic_uq_init_data_gap}
\end{minipage}
\end{figure}
\Fref{fig:parabolic_uq_init_data_gen} shows the generalization error $\mathcal{E}_G$ together with the training error $\mathcal{E}_T$ 
of the averaged trained ensemble for approximating the observable $g_h$ corresponding to the time-evolution equation \eqref{parPDE} 
for $d=16$ dimensions. 
We can see that the decay of the generalization error is approximately of second order. 
Additionally, \Fref{fig:parabolic_uq_init_data_gap} shows the generalization gap $|\mathcal{E}_G - \mathcal{E}_T|$, which also decays with a rate of almost $2$. 
This is again in accordance with our theoretical findings and thus demonstrates that the proposed algorithm can successfully be applied to UQ for time-dependent PDEs.

Note that compared to the previous experiments, the numerical range of the observable is rather small in most of the parameter domain $ \left [ - \frac{1}{2}, \frac{1}{2} \right ]^{d} $, with the exception of values of $ y $ towards the corner $ (\frac{1}{2},\ldots,\frac{1}{2}) $, where the range is much larger. 
The presence of such localized strong variations in the samples makes the training particularly difficult. Hence, in order to allow the DNNs to learn such outliers, we reduce the regularization parameter $\lambda$ in \Tref{tab:ensemble_params} to $10^{-7},10^{-8},10^{-9}$. Additionally, we increase the number of epochs (equivalent to training iterations in our full-batch mode) to 100k.
\section{Discussion}
\label{sec:5}
A diverse set of problems in scientific computing involving PDEs, such as
uncertainty quantification (UQ), (Bayesian) inverse problems, optimal control and design 
are of the \emph{many query} type. 
I.e., their numerical solution requires a large number of calls to some underlying numerical PDE solver. 
As PDE solvers, particularly in multiple space dimensions, could be expensive, 
the numerical solution of such many query problems can be prohibitively expensive. 
Hence, the design of efficient and accurate surrogate models 
is of great importance as they can make such many query problems 
computationally tractable in engineering applications.

In this article, we propose a surrogate algorithm  
based on deep neural networks, to approximate observables of interest, 
on solution families of PDE models. 
The key novelty of our algorithm \emph{lies in the use of  deterministic families of training points, 
defined by polynomial lattices, that arise in the context of high-order Quasi-Monte Carlo (HoQMC) integration methods}. 
In particular, 
we employ training points that correspond to quadrature points defined by 
extrapolated polynomial lattice (EPL) points (\cite{DGY19,DLS20_902})
or 
interlaced polynomial lattice (IPL) points  (\cite{DKLNS14} and the references therein).
The resulting DL-HoQMC algorithm possesses the following attribute; 
as long as the underlying map (observable) is holomorphic in a precise sense, defined in Section \ref{sec:3},
and the underlying deep neural networks are such that the activation function is similarly holomorphic 
as well as the weights of the neural network satisfy the conditions of Proposition \ref{prop:deepholo}, 
we prove that the resulting generalization gap (i.e., the difference between generalization and training errors),
\begin{itemize}
\item 
decays quadratically 
(and, given sufficiently small summability exponent $p\in (0,1)$ and sufficient large QMC integration order,
at arbitrary high-order) 
with respect to the number of training points,
\item 
the rate of convergence is \emph{independent of the dimension $d$ of the parameter domain}.
\end{itemize}
Thus, at least for a (large) class of maps, 
the proposed algorithm has a significantly higher rate of decay of the generalization 
gap (in terms of the size of training set) 
than standard deep learning algorithms that use random training points, 
while at the same time overcoming the \emph{curse of dimensionality}. 
 This is afforded by suitable sparsity in the DtO map, 
     as ensured here via quantified parametric holomorphy.
This removes a major bottleneck in the use of deep learning algorithms 
for regression problems in scientific computing, 
where the use of i.i.d random training points requires 
large computational resources on account of the slow rate of convergence.
See, e.g., \cite{MR1} and references therein. 

We present numerical experiments, 
involving model problems for both elliptic and parabolic PDEs, 
that validate the proposed theory and demonstrate the ability of the 
DL-HoQMC algorithm to approximates observables of PDEs in very high dimensions, efficiently.  

Hence, 
the proposed algorithm promises to provide efficient DNN surrogates for  parametric PDEs 
with high dimensional state- and / or parameter spaces.
Nevertheless, 
it is important to point out the following caveats:
\begin{itemize}
\item 
The estimates we prove in Theorem \ref{thm:qmcgap} are on the generalization gap 
and we do not attempt to estimate the training error in any way. 
However, this is standard practice in machine learning \cite{MLbook} as estimating the training error 
that arises from using stochastic gradient descent for a (highly) non-convex 
very high dimensional optimization problem is quite challenging. 
\item 
The estimate on the generalization gap requires holomorphy 
of the DtO map and of the DNN emulating this DtO map.
In Proposition \ref{prop:deepholo}, 
we provide sufficient conditions on the weights to the network in order to ensure this holomorphy. 
However, 
in practice and as pointed out in Section \ref{sec:4}, 
we do not explicitly ensure that the trained weights satisfy these bounds. 
These bounds are verified \emph{a posteriori} imply a subtle role for 
regularization of the loss function \eqref{eq:lf} that needs to further elucidated. 
\item 
We measured the generalization gap in integral norms with even $q\in 2{\mathbb N}$,
in order to allow for analytic continuation of the parametric integrand, 
which was a necessary ingredient to allow for QMC integration.
Numerical experiments indicated, however, a similar generalization gap decay
also for $q=1,3$, indicating that the condition $q\in 2{\mathbb N}$ is
not necessary.
\item 
In the numerical examples reported in Section \ref{sec:4},
we considered linear, elliptic and parabolic partial differential
equations, subject to either affine-parametric uncertain input data, or 
subject to holomorphic maps (specifically, $\exp( )$) of such inputs.
It was proved in e.g. \cite{CCS15} for nonlinear, 
parametric holomorphic operator equations that such inputs, 
with summability conditions of the sequence $\bmbeta$ following from
(assumed) decay conditions for the elements of the sequence $\{ \psi_j \}_{j\geq 1}$,
imply $(\bmbeta,p,\eps)$-holomorphy of the parametric solution manifolds.
Further examples of $(\bmbeta,p,\eps)$-holomorphic DtO maps include 
time-harmonic electromagnetic scattering (e.g.\ \cite{AJSZ20_2734})
in parametric scatterers, 
viscous, incompressible fluids in uncertain geometries (e.g.\ \cite{CSZ18_2319}),
boundary integral equations on parametric boundaries (e.g.\ \cite{HS19_847}),
and 
parametric, dynamical systems described by large systems of initial-value ODEs
(e.g.\ \cite{RegDedAQ_MORODEDNN} and, for a proof of 
parametric holomorphy of solution manifolds, \cite{HS13_1085}),
and \cite{HSZ20_875} for DtO maps for Bayesian Inverse Problems for PDEs.

The presently developed results being based only on quantified, parametric
holomorphy on the DtO maps will be directly applicable also to these
settings.
\end{itemize}
Finally, we point out that the DL-HoQMC algorithm 
can be used to accelerate the computations for UQ \cite{LMR1} and 
PDE constrained optimization \cite{LMRP1}, 
among other many-query problems for DtO maps of systems governed by PDEs.

\section*{Acknowledgements.} The research of SM and TKR was partially supported by European Research Council Consolidator grant ERCCoG 770880: COMANFLO.

{\small
\bibliographystyle{plain}
\bibliography{References}
}
\end{document}